\title{Analysis of block-preconditioners for models of coupled
  magma/mantle dynamics}
\author{Sander Rhebergen\thanks{Mathematical Institute,
    University of Oxford, Andrew Wiles Building, Radcliffe Observatory
    Quarter, Woodstock Road, Oxford OX2 6GG, United Kingdom and
    Department of Earth Sciences, University of Oxford, South Parks
    Road, Oxford OX1 3AN, United Kingdom ({\tt
      sander.rhebergen@maths.ox.ac.uk}).}  \and Garth
  N. Wells\thanks{Department of Engineering, University of Cambridge,
    Trumpington Street, Cambridge CB2 1PZ, United Kingdom ({\tt
      gnw20@cam.ac.uk}).} \and Richard F. Katz\thanks{Department of
    Earth Sciences, University of Oxford, South Parks Road, Oxford OX1
    3AN, United Kingdom ({\tt richard.katz@earth.ox.ac.uk}).}  \and
  Andrew J. Wathen\thanks{Mathematical Institute, University of
    Oxford, Andrew Wiles Building, Radcliffe Observatory Quarter,
    Woodstock Road, Oxford OX2 6GG, United Kingdom ({\tt
      andy.wathen@maths.ox.ac.uk}).} }
\begin{document}
\maketitle
\begin{abstract}
  This article considers the iterative solution of a finite element
  discretisation of the magma dynamics equations. In simplified form,
  the magma dynamics equations share some features of the Stokes
  equations. We therefore formulate, analyse and numerically test a
  Elman, Silvester and Wathen-type block preconditioner for magma
  dynamics. We prove analytically and demonstrate numerically the
  optimality of the preconditioner. The presented analysis highlights
  the dependence of the preconditioner on parameters in the magma
  dynamics equations that can affect convergence of iterative linear
  solvers. The analysis is verified through a range of two- and
  three-dimensional numerical examples on unstructured grids, from
  simple illustrative problems through to large problems on subduction
  zone-like geometries. The computer code to reproduce all numerical
  examples is freely available as supporting material.
\end{abstract}
\begin{keywords}
  Magma dynamics, mantle dynamics, finite element method,
  preconditioners.
\end{keywords}
\begin{AMS}
  65F08, 76M10, 86A17, 86-08.
\end{AMS}
\pagestyle{myheadings}
\thispagestyle{plain}
\markboth{S. RHEBERGEN, G.~N.~WELLS, R.~F. KATZ AND A.J.~WATHEN}%
{ANALYSIS OF PRECONDITIONERS FOR COUPLED MAGMA/MANTLE DYNAMICS}

\section{Introduction}

The mantle of Earth extends from the bottom of the crust to the top of
the iron core, some 3000~km below. Mantle rock, composed of silicate
minerals, behaves as an elastic solid on the time scale of seismic
waves but over geological time the mantle convects at high Rayleigh
number as a creeping, viscous fluid~\citep{Schubert:book}. This
convective flow is the hidden engine for plate tectonics, giving rise
to plate boundaries such as mid-ocean ridges (divergent) and
subduction zones (convergent). Plate boundaries host the vast majority
of terrestrial volcanism; their volcanoes are fed by magma extracted
from below, where partial melting of mantle rock occurs (typically at
depths less than $\sim$100~km).

Partially molten regions of the mantle are of interest to
geoscientists for their role in tectonic volcanism and in the chemical
evolution of the Earth.  The depth of these regions makes them
inaccessible for direct observation, and hence studies of their
dynamics have typically involved numerical simulation. Simulations are
often based on a system of partial differential equations derived
by~\citet{McKenzie:1984} and since elaborated and generalised by other
authors~\citep[e.g.,][]{Bercovici:2003, Simpson:2010a,
  Simpson:2010b}. The equations describe two interpenetrating fluids
of different density and vastly different viscosity: solid and molten
rock (i.e.,~mantle and magma). The grains of the rock form a viscously
deformable, permeable matrix through which magma can percolate. This
is captured in the theory by a coupling of the Stokes equations for
the mantle with Darcy's law for the magma. Although each phase is
independently incompressible, the two-phase mixture allows for
divergence or convergence of the solid matrix, locally increasing or
decreasing the volume fraction of magma. This process is modulated by
a compaction viscosity, and gives rise to much of the interesting
behaviour associated with coupled magma/mantle
dynamics~\cite{Spiegelman:1993a, Spiegelman:1993b, Katz:2006,
  Takei:2013}.

The governing equations have been solved in a variety of contexts,
from idealised studies of localisation and wave behaviour
\cite[e.g.][]{Aharonov:1995, Barcilon:1986} to applied studies of
plate-tectonic boundaries, especially mid-ocean ridges
\cite[e.g.][]{Ghods:2000, Katz:2008}. These studies have employed
finite volume techniques on regular, Cartesian grids
\cite[e.g.][]{Katz:2007}. Unlike mid-ocean ridges, subduction zones
have a plate geometry that is awkward for Cartesian grids; it is,
however, conveniently meshed with triangles or tetrahedra, which can
also focus resolution where it is most needed
\cite{Keken:2008}. Finite element simulations of pure mantle
convection in subduction zones are common in the literature, but it
remains a challenge to model two-phase, three-dimensional,
magma/mantle dynamics of subduction, even though this is an area of
active research~\citep{Keller:2013, Wilson:2013b}. Such models require
highly refined computational meshes, resulting in very large systems
of algebraic equations. To solve these systems efficiently, iterative
solvers together with effective preconditioning techniques are
necessary. Although the governing equations are similar to those of
Stokes flow, there has been no prior analysis of their discretisation
and numerical solution by the finite element method.

The most computationally expensive step in modelling the partially
molten mantle is typically the solution of a Stokes-like problem for
the velocity of the solid matrix.  To address this bottleneck in the
context of large, unstructured grids for finite element
discretisations, we describe, analyse, and test a preconditioner for
the algebraic system resulting from the simplified McKenzie equations.
The system of equations is similar to the Stokes problem, for which
the Silvester--Wathen preconditioner \cite{Silvester:1994} has been
proven to be optimal, i.e., the iteration count of the iterative
method is independent of the size of the algebraic system for a
variety of discretisations of the Stokes equations (see also
\cite{May:2008}). The key lies in finding a suitable approximation to
the Schur complement of the block matrix resulting from the finite
element discretisation. We follow this approach to prove and
demonstrate numerically the optimality of the preconditioner for
coupled magma/mantle dynamics problems. The analysis and numerical
examples highlight some issues specific to magma/mantle dynamics
simulations regarding the impact of model parameters on the solver
performance. To the best of our knowledge, together with the work of
\citet{Katz:2013}, we present the first three dimensional computations
of the (simplified) McKenzie equations, and the first analysis of a
preconditioner for this problem.

In this work we incorporate analysis, subduction zone-inspired
examples, and software implementation. The analysis is confirmed by
numerical examples that range from illustrative cases to large,
representative models of subduction zones solved using parallel
computers. The computer code to reproduce all presented examples is
parallelised and is freely available under the Lesser GNU Public
License (LGPL) as part of the supporting
material~\citep{supporting-material}.  The proposed preconditioning
strategies have been implemented using libraries from the FEniCS
Project~\citep{alnaes:2013,logg:2010,fenics:book,oelgaard:2010} and
PETSc~\citep{petsc-efficient,petsc-user-ref,petsc-web-page}. The
FEniCS framework provides a high degree of mathematical abstraction,
which permits the proposed methods to be implemented quickly,
compactly and efficiently, with a close correspondence between the
mathematical presentation in this paper and the computer
implementation in the supporting material.

The outline of this article is as follows. In
Section~\ref{s:magmaDynamics} we introduce the simplified McKenzie
equations for coupled magma/mantle dynamics, followed by a finite
element method for these equations in Section~\ref{s:fem}. A
preconditioner analysis is conducted in Section~\ref{s:precond_stokes}
and its construction is discussed in
Section~\ref{s:pcconstruction}. Through numerical simulations in
Section~\ref{s:numsim} we verify the analysis; conclusions are drawn
in Section~\ref{s:conclusions}.

\section{Partially molten magma dynamics}
\label{s:magmaDynamics}

Let $\Omega \subset \mathbb{R}^d$ be a bounded domain with $2 \le d
\le 3$. The \citet{McKenzie:1984} model on $\Omega$ reads
\begin{align}
  \label{eq:pf_mckenzie_a}
  \partial_{t} \phi - \nabla \cdot \del{(1 - \phi)\mathbf{u}} &= 0,
  \\
  \label{eq:pf_mckenzie_b}
  -\nabla \cdot 2\eta \boldsymbol{\epsilon}(\mathbf{u}) + \nabla p_{\rm f}
  &= \nabla \del{\del{\zeta - \tfrac{2}{3} \eta} \nabla \cdot \mathbf{u}}
  -\bar{\rho} g \mathbf{e}_{3},
  \\
  \label{eq:pf_mckenzie_c}
  \nabla \cdot \mathbf{u}
  &= \nabla \cdot \frac{\kappa}{\mu} \nabla \del{p_{\rm f}
  + \rho_{\rm f} g z},
\end{align}
where $\phi$ is porosity, $\mathbf{u}$ is the matrix velocity,
$\boldsymbol{\epsilon}(\mathbf{u}) = (\nabla\mathbf{u} +
(\nabla\mathbf{u})^T)/2$ is the strain rate tensor, $\kappa$ is
permeability, $\mu$ is the melt viscosity, $\eta$ and $\zeta$ are the
shear and bulk viscosity of the matrix, respectively, $g$ is the
constant acceleration due to gravity, $\mathbf{e}_{3}$ is the unit
vector in the $z$-direction (i.e., $\mathbf{e}_{3} = (0, 1)$ when $d =
2$ and $\mathbf{e}_{3} = (0, 0, 1)$ when $d = 3$), $p_{\rm f}$ is the
melt pressure, $\rho_{\rm f}$ and $\rho_{\rm s}$ are the constant melt
and matrix densities, respectively, and $\bar{\rho} = \rho_{\rm f}
\phi + \rho_{\rm s}(1 - \phi)$ is the phase-averaged density. Here we
assume that $\mu$, $\eta$ and $\zeta$ are constants and that $\kappa$
is a function of $\phi$. The magma (fluid) velocity $\mathbf{u}_{\rm
  f}$ can be obtained from $\mathbf{u}$, $\phi$ and $p_{\rm f}$
through:
\begin{equation}
  \label{eq:pf_magma_vel}
  \mathbf{u}_{\rm f} = \mathbf{u}
  - \frac{\kappa}{\phi\mu} \nabla \del{p_{\rm f} + \rho_{\rm f} g z}.
\end{equation}
It will be useful to decompose the melt pressure as $p_{\rm f} = p -
\rho_{\rm s} g z$, where $p$ is the dynamic pressure and $\rho_{\rm s}
g z$ the `lithostatic' pressure. Equations~\eqref{eq:pf_mckenzie_b},
\eqref{eq:pf_mckenzie_c} and~\eqref{eq:pf_magma_vel} may then be
written as
\begin{align}
  \label{eq:p_mckenzie_b}
  -\nabla \cdot 2\eta \boldsymbol{\epsilon}(\mathbf{u}) + \nabla p
  &= \nabla \left(\left(\zeta - \tfrac{2}{3} \eta \right)
  \nabla \cdot \mathbf{u} \right) + g\Delta \rho \phi \mathbf{e}_{3},
  \\
  \label{eq:p_mckenzie_c}
  \nabla \cdot \mathbf{u}
  &= \nabla \cdot \frac{\kappa}{\mu}
  \nabla \left(p - \Delta \rho g z\right),
  \\
  \label{eq:p_magma_vel}
  \mathbf{u}_{\rm f} &= \mathbf{u} - \frac{\kappa}{\phi\mu}
  \nabla \left(p - \Delta \rho g z \right),
\end{align}
where $\Delta \rho = \rho_{\rm s} - \rho_{\rm f}$. Constitutive
relations are given by
\begin{equation}
  \kappa = \kappa_{0} \del{\frac{\phi}{\phi_{0}}}^{n}, \quad
  \zeta = r_{\zeta} \eta,
\end{equation}
where $\phi_{0}$ is the characteristic porosity, $\kappa_{0}$ the
characteristic permeability, $n \ge 1$ is a dimensionless constant and
$r_{\zeta}$ is the ratio between matrix bulk and shear viscosity. We
non-dimensionalise \eqref{eq:pf_mckenzie_a}, \eqref{eq:p_mckenzie_b},
\eqref{eq:p_mckenzie_c} and \eqref{eq:p_magma_vel} using
\begin{equation}
  \mathbf{u} = u_{0} \mathbf{u}^{\prime}, \
  \mathbf{x} = H \mathbf{x}^{\prime}, \
  t = (H/u_0) t^{\prime}, \
  \kappa  = \kappa_{0} \kappa^{\prime}, \
  p = \Delta \rho gH p^{\prime},
\end{equation}
where primed variables are non-dimensional, $u_{0}$ is the velocity
scaling, given by
\begin{equation}
  u_{0} = \frac{\Delta \rho g H^{2}}{2\eta},
\end{equation}
and $H$ is a length scale.  Dropping the prime notation, the McKenzie
equations (\eqref{eq:pf_mckenzie_a}, \eqref{eq:p_mckenzie_b} and
\eqref{eq:p_mckenzie_c}), in non-dimensional form are given by
\begin{align}
  \label{eq:mckenzie_nd_a}
  \partial_{t}\phi - \nabla \cdot \del{(1 - \phi)\mathbf{u}} &= 0,
  \\
  \label{eq:mckenzie_nd_b}
  -\nabla \cdot \boldsymbol{\epsilon}(\mathbf{u}) + \nabla p
  &= \nabla\del{\tfrac{1}{2}\del{r_{\zeta}-\tfrac{2}{3}}
    \nabla \cdot \mathbf{u}} + \phi \mathbf{e}_{3},
  \\
  \label{eq:mckenzie_nd_c}
  \nabla\cdot\mathbf{u}
  &= \frac{2R^{2}}{r_{\zeta} + 4/3}
  \nabla \cdot \del{\del{\frac{\phi}{\phi_{0}}}^{n}
    \del{\nabla p -  \mathbf{e}_{3}}},
\end{align}
where $R = \delta/H$ with $\delta$ the compaction length defined as
\begin{equation}
  \delta = \sqrt{\frac{(r_{\zeta} + 4/3)\kappa_{0} \eta}{\mu}},
\end{equation}
and \eqref{eq:p_magma_vel} becomes
\begin{equation}
  \mathbf{u}_{\rm f} = \mathbf{u} - \frac{2R^{2}}{r_{\zeta} + 4/3}\frac{1}{\phi}
  \del{\frac{\phi}{\phi_{0}}}^{n}
  \del{\nabla p -  \mathbf{e}_{3}}.
\end{equation}

When solving the McKenzie model numerically for time-dependent
simulations, \eqref{eq:mckenzie_nd_a} is usually decoupled from
\eqref{eq:mckenzie_nd_b} and \eqref{eq:mckenzie_nd_c}. Porosity is
updated with \eqref{eq:mckenzie_nd_a} after which the velocity and
pressure are determined by solving \eqref{eq:mckenzie_nd_b} and
\eqref{eq:mckenzie_nd_c}; iteration can be used to better capture the
coupling. The most expensive part of this procedure is solving
\eqref{eq:mckenzie_nd_b} and~\eqref{eq:mckenzie_nd_c}. In this work we
study an optimal solver for equations \eqref{eq:mckenzie_nd_b}
and~\eqref{eq:mckenzie_nd_c} for a given porosity field. We remark
that an alternative to decoupling \eqref{eq:mckenzie_nd_a} from
\eqref{eq:mckenzie_nd_b} and \eqref{eq:mckenzie_nd_c} is to use a
composable linear solver for the full system
\eqref{eq:mckenzie_nd_a}-\eqref{eq:mckenzie_nd_c}, see
\citet{brown:2012}. In this case, our optimal solver may be used as a
preconditioner for part of this composable linear solver.

For the rest of this paper we replace $(r_{\zeta} - 2/3)/2$ by a
constant $\alpha$. Furthermore, we replace
\begin{equation}
  \frac{R^{2}}{\alpha + 1}\del{\frac{\phi}{\phi_{0}}}^{n}
\end{equation}
by a spatially variable function $k(\mathbf{x})$ (independent of
$\alpha$ and $\phi$) and we obtain the problem
\begin{subequations}
  \label{eq:magma}
  \begin{alignat}{1}
    \label{eq:magma_a}
    -\nabla \cdot \boldsymbol{\epsilon}(\mathbf{u}) + \nabla p
    &= \nabla(\alpha\nabla \cdot \mathbf{u}) + \phi  \mathbf{e}_{3},
    \\
    \label{eq:magma_b}
    \nabla \cdot\mathbf{u} &= \nabla \cdot (k(\nabla p -  \mathbf{e}_{3})).
  \end{alignat}
\end{subequations}
For coupled magma/mantle dynamics problems, $\alpha$ may range from
$-1/3$ to approximately $1000$. For this reason we will assume in this
paper that $-1/3 \le \alpha \le 1000$. We also bound $k$: $0 \le k_{*}
\le k(\mathbf{x}) \le k^{*}$ for all $\mathbf{x} \in \Omega$. In the
infinite-dimensional setting, we note that if $k(\mathbf{x}) = 0$
everywhere in $\Omega$, the compaction stress $\nabla (\alpha \nabla
\cdot \mathbf{u})$ vanishes as the velocity field is divergence free
and~\eqref{eq:magma} reduces to the Stokes equations. This will not
generally be the case for a finite element formulation, as will be
discussed in the following section.

On the boundary of the domain, $\partial \Omega$, we impose
\begin{align}
  \label{eq:bc}
    \mathbf{u} &= \mathbf{g},
\\
    \qquad - k(\nabla p - \mathbf{e}_{3}) \cdot \mathbf{n} &= 0,
\end{align}
where $\mathbf{g} : \partial \Omega \rightarrow \mathbb{R}^d$ is given
boundary data satisfying the compatibility condition
\begin{equation}
  0 = \int_{\partial \Omega} \mathbf{g} \cdot \mathbf{n} \dif s.
\end{equation}

\section{Finite element formulation}
\label{s:fem}

In this section we assume, without loss of generality, homogeneous
boundary conditions on~$\mathbf{u}$.

Let $\mathcal{T}_h$ be a triangulation of $\Omega$ with associated
finite element spaces ${\bf X}_{h} \subset
\left(H_{0}^{1}(\Omega)\right)^{d}$ and $M_h \subset H^1(\Omega) \cap
L_{0}^{2}(\Omega)$. The finite element weak formulation for
\eqref{eq:magma} and \eqref{eq:bc} is given by: find $\mathbf{u}_{h},
p_{h} \in \mathbf{X}_{h} \times M_{h}$ such that
\begin{equation}
  \label{eq:fem_weak}
  \mathcal{B}(\mathbf{u}_{h}; p_{h}, \mathbf{v}; q)
  = \int_{\Omega} \phi \mathbf{e}_{3} \cdot  \mathbf{v} \dif x
  - \int_{\Omega} k  \mathbf{e}_{3} \cdot  \nabla q \dif x
  \qquad \forall \mathbf{v}, q \in {\bf X}_{h} \times M_{h},
\end{equation}
where
\begin{equation}
  \label{eq:Bform}
    \mathcal{B}(\mathbf{u}; p, \mathbf{v}; q) = a(\mathbf{u}, \mathbf{v})
    + b(p,\mathbf{v}) + b(q,\mathbf{u}) - c(p,q),
\end{equation}
and
\begin{equation}
  \label{eq:bilinear_parts}
  \begin{split}
    a(\mathbf{u}, \mathbf{v})
    &= \int_{\Omega}
    \boldsymbol{\epsilon}(\mathbf{u}) : \boldsymbol{\epsilon}(\mathbf{v})
    + \alpha(\nabla \cdot \mathbf{u}) (\nabla \cdot \mathbf{v}) \dif x,
    \\
    b(p, \mathbf{v}) &= - \int_{\Omega} p \nabla \cdot \mathbf{v} \dif x,
    \\
    c(p, q) &= \int_{\Omega} k \nabla p \cdot  \nabla q \dif x.
  \end{split}
\end{equation}
\begin{proposition}
  \label{prop:a-stable}
  For $\alpha > -1$, there exists a $c_{\alpha} > 0$ such that
  \begin{equation}
    a(\mathbf{v}, \mathbf{v}) \ge c_{\alpha} \norm{\mathbf{v}}_{1}^{2}
    \quad \forall \ \mathbf{v} \in \del{H^{1}_{0}(\Omega)}^{d}.
  \end{equation}
\end{proposition}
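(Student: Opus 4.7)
The plan is to reduce the coercivity of $a$ to two elementary inequalities for functions in $(H^1_0(\Omega))^d$: the identity
\begin{equation*}
  \int_\Omega \boldsymbol{\epsilon}(\mathbf{v}):\boldsymbol{\epsilon}(\mathbf{v})\, dx
  = \tfrac{1}{2}\int_\Omega |\nabla \mathbf{v}|^2\, dx
  + \tfrac{1}{2}\int_\Omega (\nabla\cdot\mathbf{v})^2\, dx,
\end{equation*}
and the pointwise/integrated bound $\int_\Omega(\nabla\cdot\mathbf{v})^2\,dx \le \int_\Omega|\nabla\mathbf{v}|^2\,dx$. The first identity is proved by expanding $|\boldsymbol{\epsilon}(\mathbf{v})|^2 = \tfrac{1}{2}|\nabla\mathbf{v}|^2 + \tfrac{1}{2}\sum_{i,j}\partial_i v_j\,\partial_j v_i$ and integrating the mixed-derivative term by parts twice (which is legitimate because $\mathbf{v} \in (H^1_0)^d$), picking up $\int(\nabla\cdot\mathbf{v})^2$. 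The second follows from the same identity once one applies Cauchy--Schwarz to $\sum_{i,j}\partial_i v_j\,\partial_j v_i$.

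Using the identity in the definition of $a$ yields the clean rewriting
\begin{equation*}
  a(\mathbf{v},\mathbf{v})
  = \tfrac{1}{2}\int_\Omega |\nabla\mathbf{v}|^2\, dx
  + \bigl(\tfrac{1}{2}+\alpha\bigr)\int_\Omega (\nabla\cdot\mathbf{v})^2\, dx,
\end{equation*}
and the argument then splits on the sign of $\tfrac{1}{2}+\alpha$. When $\alpha \ge -\tfrac{1}{2}$ the divergence term is non-negative and we immediately obtain $a(\mathbf{v},\mathbf{v}) \ge \tfrac{1}{2}\|\nabla\mathbf{v}\|_0^2$. When $-1 < \alpha < -\tfrac{1}{2}$ the coefficient is negative, so I use $\int(\nabla\cdot\mathbf{v})^2 \le \int|\nabla\mathbf{v}|^2$ (with the inequality flipping because of the negative multiplier) to obtain $a(\mathbf{v},\mathbf{v}) \ge (1+\alpha)\|\nabla\mathbf{v}\|_0^2$, which is strictly positive precisely because $\alpha > -1$. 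Together, one gets $a(\mathbf{v},\mathbf{v}) \ge \min\bigl\{\tfrac{1}{2},\,1+\alpha\bigr\}\|\nabla\mathbf{v}\|_0^2$.

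Finally I invoke the Poincar\'e--Friedrichs inequality on $(H^1_0(\Omega))^d$ to upgrade $\|\nabla\mathbf{v}\|_0^2$ to $\|\mathbf{v}\|_1^2$, and absorb the Poincar\'e constant into $c_\alpha$. The only subtlety is the second case: one has to resist the temptation to apply the cruder pointwise bound $(\nabla\cdot\mathbf{v})^2\le d\,|\nabla\mathbf{v}|^2$, which would yield coercivity only for $\alpha > -1/d$ and fail to reach the stated range $\alpha > -1$. The integrated bound $\int(\nabla\cdot\mathbf{v})^2\le\int|\nabla\mathbf{v}|^2$, which is sharp and uses $\mathbf{v}\in H^1_0$ essentially, is what gives the optimal threshold and is the one genuinely non-trivial step.
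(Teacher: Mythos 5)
Your proof is correct, and it follows the same underlying route as the paper's, but in a self-contained form. The paper simply cites the inequality chain $\norm{\nabla\cdot\mathbf{v}}^2 \le \norm{\boldsymbol{\epsilon}(\mathbf{v})}^2 \le \norm{\nabla\mathbf{v}}^2$ on $(H^1_0(\Omega))^d$ and then invokes Korn's inequality; you instead prove the exact identity $\norm{\boldsymbol{\epsilon}(\mathbf{v})}^2 = \tfrac{1}{2}\norm{\nabla\mathbf{v}}^2 + \tfrac{1}{2}\norm{\nabla\cdot\mathbf{v}}^2$ by integrating the cross term by parts, which simultaneously yields both cited inequalities and the first Korn inequality on $H^1_0$, so nothing is left as a black box. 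Your resulting decomposition $a(\mathbf{v},\mathbf{v}) = \tfrac{1}{2}\norm{\nabla\mathbf{v}}^2 + (\tfrac{1}{2}+\alpha)\norm{\nabla\cdot\mathbf{v}}^2$ makes the case split transparent and gives the explicit constant $\min\{\tfrac{1}{2},\,1+\alpha\}$ (slightly sharper than the paper's $(1+\alpha)$ route for $-\tfrac12\le\alpha<0$), at the cost of a few lines of computation that the paper avoids by citation. Your closing remark is also well taken: the integrated bound $\norm{\nabla\cdot\mathbf{v}}^2\le\norm{\nabla\mathbf{v}}^2$, which genuinely uses $\mathbf{v}\in (H^1_0(\Omega))^d$, is what delivers the full range $\alpha>-1$, whereas any pointwise estimate would not.
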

\begin{proof}
  The proposition follows from
  \begin{equation}
    \norm{\nabla \cdot \mathbf{v}}^2
    \le \norm{\boldsymbol{\epsilon}(\mathbf{v})}^2
    \le \norm{\nabla \mathbf{v}}^2
    \quad \forall \ \mathbf{v} \in \del{H^{1}_{0}(\Omega)}^{d},
    \label{eq:div-grad-norm-inequality}
  \end{equation}
  (see Ref.~\citep[Eq.~(3.4)]{Grinevich:2009}) and the application of
  Korn's inequality.
\end{proof}

We will consider finite elements that are inf-sup
stable~\citep{Brezzi:book} in the degenerate limit of $k = 0$, i.e.,
$a(\mathbf{u}, \mathbf{v})$ is coercive (see
Proposition~\ref{prop:a-stable}), $c(p, p) \ge 0 \ \forall p \in
M_{h}$ and for which there exists a constant $c_{1} > 0$ independent
of $h$ such that
\begin{equation}
  \label{eq:lbb}
  \max_{\mathbf{v}_{h} \in \mathbf{X}_{h}}
  \frac{b(q_{h}, \mathbf{v}_{h})}{\norm{\nabla \mathbf{v}_{h}}}
  \ge
  c_{1} \norm{q_h} \qquad \forall q_{h} \in M_{h}.
\end{equation}
In particular, we will use Taylor--Hood ($P^{2}$--$P^{1}$) finite
elements on simplices. We note that while in the infinite-dimensional
setting the Stokes equations are recovered from \eqref{eq:magma} when
$k = 0$, this is not generally the case for the discrete weak
formulation in~\eqref{eq:fem_weak} when~$\alpha \ne 0$. Obtaining the
Stokes limit in the finite element setting when $\alpha \ne 0$
requires the non-trivial property that the divergence of functions in
${\bf X}_h$ lie in the pressure space~$M_{h}$. This is not the case
for Taylor--Hood finite elements.

The discrete system \eqref{eq:fem_weak} can be written in block matrix
form as
\begin{equation}
  \label{eq:matrixForm}
  \begin{bmatrix}
    A & B^T
    \\
    B & -C_{k}
  \end{bmatrix}
  \begin{bmatrix}
    u
    \\
    p
  \end{bmatrix}
  =
  \begin{bmatrix}
    f
    \\
    g
  \end{bmatrix},
\end{equation}
where $u \in \mathbb{R}^{n_{u}}$ and $p \in N^{n_{p}} = \{q\in
\mathbb{R}^{n_{p}}| q\ne 1\}$ are, respectively, the vectors of the
discrete velocity and pressure variables with respect to appropriate
bases for $\mathbf{X}_{h}$ and~$M_{h}$. The space $N^{n_{p}}$ satisfies
the zero mean pressure condition.

For later convenience, we define the negative of the `pressure'
Schur complement~$S$:
\begin{equation}
  S = B A^{-1} B^{T} + C_{k},
  \label{eq:pressure-schur}
\end{equation}
and the scalar pressure mass matrix $Q$ such that
\begin{equation}
  \norm{q_h}^2 = \langle Qq, q \rangle,
  \label{eq:pressure-mass}
\end{equation}
for $q_h \in M_h$ and where $q \in \mathbb{R}^{n_{p}}$ is the vector
of the coefficients associated with the pressure basis and
$\langle \cdot, \cdot \rangle$ denotes the standard Euclidean scalar
product.

The differences between the matrix formulation of the magma/mantle
equations~\eqref{eq:magma} and the Stokes equations lie in the
matrices $A$ and $C_{k}$. In the case of the magma/mantle dynamics,
$A$ includes the discretisation of compaction stresses: a `grad-div'
term weighted by the factor~$\alpha$.  Such `grad-div' terms are known
to be problematic in the context of multigrid methods as the modes
associated with lowest eigenvalues are not well represented on a
coarse grid~\citep{arnold:1997}.  There have been a number of
investigations into this issue for $H({\rm div})$ finite element
problems, e.g.~\citep{Arnold:2000,Kolev:2012}.  The second matrix
which differs from the Stokes discretisation is~$C_{k}$. For
sufficiently large $k$, this term provides Laplace-type pressure
stabilisation for elements that would otherwise be unstable for the
Stokes problem.

\section{Optimal block diagonal preconditioners}
\label{s:precond_stokes}

To model three-dimensional magma/mantle dynamics of subduction,
efficient iterative solvers together with preconditioning techniques
are needed to solve the resulting algebraic systems of equations. The
goal of this section is to introduce and prove optimality of a class
of block diagonal preconditioners for~\eqref{eq:matrixForm}.

To prove optimality of a block preconditioner for the McKenzie
problem, we first present a number of supporting results.

\begin{proposition}
  \label{lem:coercivity_c} The bilinear form $c$
  in~\eqref{eq:bilinear_parts} satisfies
  \begin{equation}
    c(q, q) \ge k_* \norm{\nabla q}^2 \qquad \forall q \in M^h.
    \label{eq:c-coercive}
  \end{equation}
\end{proposition}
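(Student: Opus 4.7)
The statement is essentially a direct consequence of the definition of $c$ together with the pointwise lower bound $k_* \le k(\mathbf{x})$ assumed earlier in the paper. The plan is therefore very short: substitute $p = q$ into the definition of $c$ in~\eqref{eq:bilinear_parts}, pull the pointwise bound on $k$ inside the integral, and identify the resulting quantity as $k_* \|\nabla q\|^2$.

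More concretely, first I would write
\begin{equation*}
  c(q,q) = \int_{\Omega} k(\mathbf{x}) \, \nabla q \cdot \nabla q \dif x
  = \int_{\Omega} k(\mathbf{x}) \, |\nabla q|^2 \dif x.
\end{equation*}
Since $k(\mathbf{x}) \ge k_* \ge 0$ almost everywhere in $\Omega$, the integrand is nonnegative and can be bounded from below pointwise by $k_* |\nabla q|^2$. Integrating the resulting inequality gives
\begin{equation*}
  c(q,q) \ge k_* \int_{\Omega} |\nabla q|^2 \dif x = k_* \norm{\nabla q}^2,
\end{equation*}
which is exactly~\eqref{eq:c-coercive}. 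Note that no finite element machinery is actually used: the argument applies for any $q \in H^1(\Omega)$, and in particular for $q \in M^h$.

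There is no real obstacle here. The only point requiring a (trivial) check is that $k(\mathbf{x}) \ge k_*$ holds pointwise a.e., which is precisely the assumption $0 \le k_* \le k(\mathbf{x}) \le k^*$ stated just after~\eqref{eq:magma}. One might also note in passing that for $k_* > 0$ the bilinear form $c$ is coercive on $H^1(\Omega)/\mathbb{R}$ by Poincaré, but this stronger statement is not needed for the present proposition, which is phrased purely in terms of the seminorm $\|\nabla q\|$.
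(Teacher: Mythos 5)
Your proof is correct and is essentially identical to the paper's: the authors also just write $c(q,q) = \norm{k^{1/2}\nabla q}^2 \ge \norm{k_*^{1/2}\nabla q}^2$, which is the same pointwise-bound-under-the-integral argument you spelled out. Nothing further is needed.
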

\begin{proof}
  This follows directly from
  \begin{equation}
    c(q, q) = \norm{k^{1/2} \nabla q}^2 \ge \norm{k_*^{1/2} \nabla q}^2.
  \end{equation}
\end{proof}

\begin{lemma}
  \label{lem:bounds_for_S_new}
  For the matrices $A$, $B$ and $C_{k}$ given in
  \eqref{eq:matrixForm}, the pressure Schur complement $S$ in
  \eqref{eq:pressure-schur} and the pressure mass matrix $Q$ in
  \eqref{eq:pressure-mass}, for an inf-sup stable formulation
  satisfying~\eqref{eq:lbb}, the following bounds hold
  \begin{equation}
    \label{eq:upperLowerBound}
    0 < c_q
    \le \frac{\langle Sq, q\rangle}{\langle (Q + C_{k})q, q \rangle}
    \le c^q, \quad \forall q \in N^{n_{p}},
  \end{equation}
  where $c^{q}$ is given by
  \begin{equation}
    \label{eq:upper_c}
    c^{q} =
    \begin{cases}
      1/(1 - |\alpha|)    & \mathrm{if}\ -1/3 \le \alpha < 0,\\
      1                   & \mathrm{if}\ \alpha \ge 0,
    \end{cases}
  \end{equation}
  and $c_{q}$ by
  \begin{equation}
    \label{eq:lower_c}
    c_{q} = \min \del{\frac{c_{1}^{2} + c_{P} k_{*}(1
        + |\alpha|)}{(1 + |\alpha|)(1 + c_{P} k_{*})}, \ 1},
  \end{equation}
  where $c_{1}$ is the inf-sup constant and $c_{P}$ the Poincar\'e
  constant.
\end{lemma}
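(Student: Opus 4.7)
The plan is to use the standard Schur-complement identity
\[
  \langle B A^{-1} B^{T} q, q \rangle
    = \sup_{\mathbf{v} \in \mathbf{X}_h \setminus \{0\}}
      \frac{b(q,\mathbf{v})^2}{a(\mathbf{v},\mathbf{v})},
\]
which decomposes $\langle Sq,q\rangle$ into an inf--sup-controlled term plus $c(q,q)$, with the two-sided divergence--gradient inequality~\eqref{eq:div-grad-norm-inequality} bridging $a(\mathbf{v},\mathbf{v})$, $\|\nabla\cdot\mathbf{v}\|^2$, and $\|\nabla\mathbf{v}\|^2$. For the upper bound I would start from Cauchy--Schwarz, $|b(q,\mathbf{v})| \le \|q\|\,\|\nabla\cdot\mathbf{v}\|$, and bound $a(\mathbf{v},\mathbf{v})$ from below using \eqref{eq:div-grad-norm-inequality}: for $\alpha\ge 0$, $a(\mathbf{v},\mathbf{v}) \ge \|\boldsymbol\epsilon(\mathbf{v})\|^2 \ge \|\nabla\cdot\mathbf{v}\|^2$, while for $-1/3 \le \alpha < 0$, $a(\mathbf{v},\mathbf{v}) \ge (1-|\alpha|)\|\nabla\cdot\mathbf{v}\|^2$, with the factor positive because $|\alpha| \le 1/3$ in this range. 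Taking the supremum over $\mathbf{v}$ then yields $\langle B A^{-1} B^T q, q\rangle \le c^q\|q\|^2$ with $c^q$ as in~\eqref{eq:upper_c}; since $c^q \ge 1$, adding $c(q,q)$ to both sides gives $\langle Sq,q\rangle \le c^q \langle(Q+C_k)q,q\rangle$.

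For the lower bound I would use~\eqref{eq:div-grad-norm-inequality} in the opposite direction to get $a(\mathbf{v},\mathbf{v}) \le \|\boldsymbol\epsilon(\mathbf{v})\|^2 + |\alpha|\,\|\nabla\cdot\mathbf{v}\|^2 \le (1+|\alpha|)\|\nabla\mathbf{v}\|^2$, and then insert a maximiser of the inf--sup bound~\eqref{eq:lbb} into the Rayleigh quotient to deduce
\[
  \langle B A^{-1} B^T q, q\rangle \ge \frac{c_1^2}{1+|\alpha|}\|q\|^2.
\]
Proposition~\ref{lem:coercivity_c} combined with the Poincar\'e inequality $\|\nabla q\|^2 \ge c_P \|q\|^2$ on the zero-mean space $M_h \subset L_0^2(\Omega)$ gives $c(q,q) \ge c_P k_* \|q\|^2$, yielding the additive estimate
\[
  \langle S q, q\rangle \ge \frac{c_1^2}{1+|\alpha|}\|q\|^2 + c(q,q).
\]

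The heart of the proof is converting this additive bound into the desired multiplicative form $c_q(\|q\|^2 + c(q,q))$. I would rearrange the target inequality as
\[
  \Bigl(\tfrac{c_1^2}{1+|\alpha|} - c_q\Bigr)\|q\|^2 + (1-c_q)\,c(q,q) \ge 0
\]
and substitute the single additional estimate $c(q,q) \ge c_P k_* \|q\|^2$ to reduce this to a purely algebraic inequality in $c_q$; solving for the largest admissible value yields the first argument of the $\min$ in~\eqref{eq:lower_c}. The trivial bound $\langle Sq,q\rangle \ge c(q,q)$ produces the other argument, $1$, which dominates when $c_P k_*$ is large. The main obstacle is verifying that this optimisation genuinely delivers the sharp constant stated in~\eqref{eq:lower_c} and that both regimes of the $\min$ are correctly captured; the inf--sup, Poincar\'e, and Cauchy--Schwarz inputs are otherwise routine.
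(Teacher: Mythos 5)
Your proposal is essentially the paper's own proof: the same supremum representation of $\langle BA^{-1}B^Tq,q\rangle$, the same use of \eqref{eq:div-grad-norm-inequality} with Cauchy--Schwarz for the upper bound and with the inf-sup condition \eqref{eq:lbb} for the additive lower bound, and the same Poincar\'e input; your direct rearrangement of the target inequality and maximisation over $c_q$ is algebraically equivalent to the paper's convex-combination argument with parameter $\xi$ (the paper splits $\langle C_kq,q\rangle$ as $(1-\xi)c(q,q)+\xi\|k^{1/2}\nabla q_h\|^2$ and optimises $\xi$). One small correction: the second branch of the $\min$ is not produced by the bound $\langle Sq,q\rangle\ge c(q,q)$, which by itself gives nothing relative to $\|q\|^2+c(q,q)$; rather, $c_q\le 1$ is forced because your substitution $c(q,q)\ge c_Pk_*\|q\|^2$ requires $1-c_q\ge 0$, and when $c_1^2/(1+|\alpha|)\ge 1$ the value $c_q=1$ follows directly from your additive estimate without Poincar\'e.
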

\begin{proof}
  Since $A$ is symmetric and positive definite, and from the
  definition of $S$
  \begin{equation}
    \begin{split}
     \langle Sq, q \rangle &= \langle A^{-1} B^{T} q, B^{T} q \rangle
      + \langle C_{k} q, q \rangle
      \\
      &= \sup_{v \in \mathbb{R}^{n_{u}}}
      \frac{\langle v, B^{T} q \rangle^{2}}{\langle A v, v \rangle}
      + \langle C_{k} q, q \rangle,
    \end{split}
  \end{equation}
  for all $q \in N^{n_{p}}$. From
  the definition of matrices $A, B, C_{k}$ and~$Q$ it then follows
  that
  \begin{equation}
    \label{eq:Sqq}
      \langle Sq, q \rangle
      = \sup_{\mathbf{v}_{h} \in \mathbf{X}_{h}}
      \frac{(q_{h}, \nabla \cdot \mathbf{v}_{h})^{2}}
              {\norm{\boldsymbol{\epsilon}(\mathbf{v}_{h})}^{2}
        + \alpha \norm{\nabla\cdot \mathbf{v}_{h}}^{2}}
      + (k \nabla q_{h}, \nabla q_{h}).
  \end{equation}

  Using~\eqref{eq:div-grad-norm-inequality} and the Cauchy--Schwarz
  inequality,
  \begin{equation}
    (q_{h}, \nabla \cdot \mathbf{v}_h)^{2}
    \le \norm{q_{h}}^{2} \norm{\boldsymbol{\epsilon}(\mathbf{v}_{h})}^{2}.
  \end{equation}
  For $-1/3 \le \alpha < 0$,
  \begin{equation}
    \begin{split}
    \norm{\boldsymbol{\epsilon}(\mathbf{v}_h)}^2
    &= \frac{1}{1 + \alpha} \del{\norm{\boldsymbol{\epsilon}(\mathbf{v}_h)}^2
    + \alpha\norm{\boldsymbol{\epsilon}(\mathbf{v}_h)}^2}
    \\
    &\le \frac{1}{1 + \alpha}\del{\norm{\boldsymbol{\epsilon}(\mathbf{v}_h)}^2
    + \alpha\norm{\nabla \cdot \mathbf{v}_h}^2},
    \end{split}
  \end{equation}
  and for $\alpha \ge 0$,
  \begin{equation}
    \norm{\boldsymbol{\epsilon}(\mathbf{v}_{h})}^{2}
    \le \norm{\boldsymbol{\epsilon}(\mathbf{v}_{h})}^{2}
    + \alpha\norm{\nabla \cdot \mathbf{v}_{h}}^{2}.
  \end{equation}
  Hence,
  \begin{equation}
    \label{eq:uboundqdivu}
    (q_{h}, \nabla \cdot \mathbf{v}_{h})^{2}
    \le c^q \norm{q_{h}}^{2} \left(\norm{\boldsymbol{\epsilon}(\mathbf{v}_h)}^2
    + \alpha\norm{\nabla \cdot \mathbf{v}_h}^2\right),
  \end{equation}
  where
  \begin{equation}
    c^q =
    \begin{cases}
      1/(1 - |\alpha|)  & \mathrm{if} \ -1/3 \le \alpha < 0,\\
      1                 & \mathrm{if} \ \alpha \ge 0.
    \end{cases}
  \end{equation}
  Combining \eqref{eq:Sqq} and~\eqref{eq:uboundqdivu},
  \begin{equation}
    \label{eq:upperbound_Sqq}
      \langle Sq, q\rangle \le c^q \norm{q_h}^2 + (k \nabla q_h, \nabla q_h)
      = c^q \langle Qq, q\rangle + \langle C_{k}q, q \rangle
      \le c^q \langle (Q + C_{k})q, q\rangle.
  \end{equation}
  This proves the upper bound in~\eqref{eq:upperLowerBound}.

  Next we determine the lower bound. Using
  \eqref{eq:div-grad-norm-inequality} and the inf-sup
  condition~\eqref{eq:lbb},
  \begin{equation}
    \label{eq:Approx_ge_zero}
    \begin{split}
      \max_{\mathbf{v}_{h} \in \mathbf{X}_{h}} \frac{(q_h, \nabla
        \cdot
        \mathbf{v}_h)^2}{\norm{\boldsymbol{\epsilon}(\mathbf{v}_h)}^2
        + \alpha\norm{\nabla \cdot \mathbf{v}_h}^2} & \ge
      \max_{\mathbf{v}_{h} \in \mathbf{X}_{h}} \frac{(q_h, \nabla
        \cdot \mathbf{v}_h)^{2}}{(1 + |\alpha|) \norm{\nabla
          \mathbf{v}_h}^{2}} \\ &\ge \frac{c_{1}^{2}}{1+|\alpha|}
      \norm{q_{h}}^{2},
    \end{split}
  \end{equation}
  which leads to
  \begin{equation}
    \label{eq:Sq1}
    \langle Sq, q\rangle \ge \frac{c_{1}^{2}}{ 1 +|\alpha|}
       \langle Qq, q  \rangle
       + \langle C_{k}q, q \rangle.
  \end{equation}
  Using Proposition~\ref{lem:coercivity_c} and the Poincar\'e
  inequality,
  \begin{equation}
    \label{eq:Cq1}
    \begin{split}
      \langle C_{k}q, q \rangle
      &= (1 - \xi) c(q_h, q_h)
      + \xi \norm{k^{1/2} \nabla q_{h}}^{2}
      \\
      &\ge (1-\xi) c(q_h, q_h)
      + \xi c_{P} k_{*} \norm{q_h}^2
      \\
      &= (1-\xi) \langle C_{k}q, q \rangle
      + \xi c_{P} k_{*} \langle Qq, q\rangle,
    \end{split}
  \end{equation}
  for any $\xi \in [0, 1]$. Combining~\eqref{eq:Sq1}
  and~\eqref{eq:Cq1},
  \begin{equation}
      \langle Sq, q\rangle
      \ge
      \del{\frac{c_{1}^2}{1 + |\alpha|}
        + \xi c_{P} k_{*}} \langle Qq, q \rangle
      + (1 - \xi) \langle C_{k}q, q \rangle,
  \end{equation}
  and setting $\xi = (1 - c_{1}^{2}/(1 +
  |\alpha|))/(1 + c_{P}k_{*})$ in the case that $c_{1}^{2}/(1 +
  |\alpha|) \le 1$, and otherwise setting $\xi = 0$,
  \begin{equation}
    \label{eq:Sq2}
        \langle Sq, q\rangle
        \ge
         \min \del{\frac{c_{1}^{2} + c_{P} k_{*}(1
        + |\alpha|)}{(1 + |\alpha|)(1 + c_{P} k_{*})}, \ 1}
         \langle (Q + C_{k})q, q \rangle,
  \end{equation}
  from which $c_q$ is deduced.
\end{proof}

For the discretisation of the Stokes equations, it was shown that the
pressure mass-matrix is spectrally equivalent to the Schur
complement~\citep{Silvester:1994}. This is recovered from
Lemma~\ref{lem:bounds_for_S_new} when $k = 0$ everywhere and~$\alpha =
0$.

\begin{lemma}
  \label{lem:bounds_for_S_Q}
  For the matrices $A$, $B$ and $C_{k}$ in \eqref{eq:matrixForm},
  $S$ in \eqref{eq:pressure-schur} and the pressure mass matrix $Q$ in
  \eqref{eq:pressure-mass}, if the inf-sup condition in \eqref{eq:lbb}
  is satisfied, then
  \begin{equation}
    \label{eq:upperBound_S_Q}
    \frac{\langle (B^T (Q + C_{k})^{-1} B v, v\rangle}{\langle Av, v \rangle}
    \le c^q \quad \forall v \in \mathbb{R}^{n_{u}},
  \end{equation}
  where $c^q$ is the constant from in~\eqref{eq:upper_c}.
\end{lemma}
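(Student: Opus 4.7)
The plan is to recognize this lemma as a dual reformulation of the upper bound already established inside the proof of Lemma~\ref{lem:bounds_for_S_new}, obtained by exchanging the roles of $v$ and $q$ in the supremum characterisation. The key estimate is the one isolated in~\eqref{eq:uboundqdivu}, namely that for every $\mathbf{v}_h \in \mathbf{X}_h$ and every $q_h \in M_h$
\begin{equation*}
  (q_h, \nabla \cdot \mathbf{v}_h)^2
  \le c^q \norm{q_h}^2
  \bigl(\norm{\boldsymbol{\epsilon}(\mathbf{v}_h)}^2
  + \alpha \norm{\nabla \cdot \mathbf{v}_h}^2\bigr),
\end{equation*}
which in matrix terms reads $\langle Bv, q\rangle^2 \le c^q \langle Qq, q\rangle\, \langle Av, v\rangle$.

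First I would rewrite the quantity of interest as a Rayleigh-type supremum: since $Q + C_k$ is symmetric positive definite,
\begin{equation*}
  \langle B^T (Q + C_k)^{-1} B v, v\rangle
  = \langle (Q + C_k)^{-1} Bv, Bv\rangle
  = \sup_{0\ne q \in \mathbb{R}^{n_p}}
    \frac{\langle Bv, q\rangle^2}{\langle (Q + C_k)q, q\rangle}.
\end{equation*}
Next I would apply the displayed bilinear inequality to each fixed $q$ in the supremum. This gives $\langle Bv, q\rangle^2 \le c^q \langle Qq, q\rangle\, \langle Av, v\rangle$, and since $C_k$ is positive semidefinite (Proposition~\ref{lem:coercivity_c}) we have $\langle Qq, q\rangle \le \langle (Q + C_k)q, q\rangle$, so
\begin{equation*}
  \frac{\langle Bv, q\rangle^2}{\langle (Q + C_k)q, q\rangle}
  \le c^q \langle Av, v\rangle.
\end{equation*}
Taking the supremum over $q$ yields the claim.

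The one subtlety worth flagging is the restriction to zero-mean pressures: $S$ is defined on $N^{n_p}$, but here we need $(Q + C_k)^{-1}$ to make sense on $\mathbb{R}^{n_p}$. Both ways of reading the statement are fine because the Cauchy--Schwarz-based estimate on $(q_h, \nabla \cdot \mathbf{v}_h)$ does not require $q_h$ to have zero mean, so the supremum can be taken over all of $\mathbb{R}^{n_p}$ (or equivalently over $N^{n_p}$ after the standard projection) with the same constant $c^q$. I do not expect any real obstacle; the only thing to be careful about is identifying the forms $a(\mathbf{v}_h,\mathbf{v}_h)$ and $\|q_h\|^2$ with the correct matrix quantities $\langle Av, v\rangle$ and $\langle Qq, q\rangle$, and then invoking $C_k \succeq 0$ at the right moment.
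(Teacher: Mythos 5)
Your proof is correct, but it takes a genuinely different route from the paper's. The paper deduces Lemma~\ref{lem:bounds_for_S_Q} formally from the \emph{statement} of Lemma~\ref{lem:bounds_for_S_new}: it drops the $C_k$ term to get $q^T B A^{-1} B^T q \le c^q\, q^T(Q+C_k)q$, passes to the symmetrised matrix $H = (Q+C_k)^{-1/2} B A^{-1} B^T (Q+C_k)^{-1/2}$, and then uses the standard fact that $MN$ and $NM$ share nonzero eigenvalues (carried out explicitly via eigenvectors and matrix square roots) to transfer the bound $\lambda_{\max}(H)\le c^q$ to $A^{-1/2}B^T(Q+C_k)^{-1}BA^{-1/2}$. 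You instead reach back \emph{inside} the proof of Lemma~\ref{lem:bounds_for_S_new} for the pointwise inequality~\eqref{eq:uboundqdivu}, i.e.\ $\langle Bv,q\rangle^2 \le c^q \langle Qq,q\rangle\langle Av,v\rangle$, and combine it with the variational characterisation $\langle (Q+C_k)^{-1}w,w\rangle = \sup_q \langle w,q\rangle^2/\langle (Q+C_k)q,q\rangle$ and $C_k\succeq 0$. Your argument is more elementary and self-contained (no matrix square roots, no eigenvector manipulation), and your observation that the Cauchy--Schwarz estimate does not need zero-mean pressures actually sidesteps a small sloppiness in the paper, where the substitution $q\gets (Q+C_k)^{1/2}q$ does not preserve $N^{n_p}$. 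What the paper's route buys in exchange is modularity: it shows the lemma is a purely algebraic consequence of the two-sided spectral bound in Lemma~\ref{lem:bounds_for_S_new}, independent of how that bound was obtained. Both proofs are ultimately instances of the same duality, and both are valid.
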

\begin{proof}
  From Lemma~\ref{lem:bounds_for_S_new}, symmetry of $A$ and positive
  semi-definiteness of~$C$,
  \begin{equation}
    \frac{q^T B A^{-1} B^T q}{q^T \del{Q + C_{k}} q}
    \le \frac{q^T \del{B A^{-1} B^T + C_{k}} q}{q^T \del{Q + C_{k}} q}
    \le c^q \quad \forall q \in N^{n_{p}}.
  \end{equation}
  Inserting $q \gets (Q + C_{k})^{1/2} q$,
  \begin{equation}
    \frac{q^T (Q + C_{k})^{-1/2} B A^{-1} B^T (Q + C_{k})^{-1/2} q}{q^T q}
    \le c^q \quad \forall q \in N^{n_{p}}.
  \end{equation}
  Defining $H = (Q + C_{k})^{-1/2} B A^{-1} B^T(Q +
  C_{k})^{-1/2}$ and denoting the maximum eigenvalue of $H$ by
  $\lambda_{\max}$ and associated eigenvector $x$, since $H$ is
  symmetric it follows that $\lambda_{\rm max} \ge v^{T} H v/(v^{T} v)
  \ \forall v \in \mathbb{R}^{n}$ and $\lambda_{\rm max} = x^{T} H
  x/(x^{T} x)$. Hence, $\lambda_{\max} \le c^q$, and
  \begin{equation}
    (Q + C_{k})^{-1/2} B A^{-1} B^T(Q + C_{k})^{-1/2} x = \lambda_{\max} x,
  \end{equation}
  and pre-multiplying both sides by $ A^{-1/2} B^T(Q + C_{k})^{-1/2}$,
  \begin{multline}
    A^{-1/2} B^T(Q + C_{k})^{-1/2}(Q
    + C_{k})^{-1/2} B A^{-1/2} A^{-1/2} B^T (Q+C_{k})^{-1/2} x
    \\
    = \lambda_{\max} A^{-1/2} B^T (Q + C_{k})^{-1/2} x.
  \end{multline}
  Letting $v = A^{-1/2} B^T (Q + C_{k})^{-1/2} x$, the above becomes
  \begin{equation}
    A^{-1/2} B^T (Q + C_{k})^{-1} B A^{-1/2} v = \lambda_{\max} v,
  \end{equation}
  and it follows from $\lambda_{\max} \le c^q$ that
  \begin{equation}
    \frac{v^T A^{-1/2} B^T (Q + C_{k})^{-1} B A^{-1/2} v}{v^T v} \le c^q
    \quad \forall v \in \mathbb{R}^{n_{u}},
  \end{equation}
  or, taking $v \leftarrow A^{-1/2} v$,
  \begin{equation}
    \frac{v^T B^T(Q + C_{k})^{-1} B v}{v^T A v} \le c^q
    \quad \forall v \in \mathbb{R}^{n_{u}},
  \end{equation}
  and the Lemma follows.
\end{proof}

We now consider diagonal block preconditioners
for~\eqref{eq:matrixForm} of the form
\begin{equation}
  \label{eq:precon_ideal}
  \mathcal{P}
  =
  \begin{bmatrix}
    P & 0 \\
    0 & T
  \end{bmatrix},\qquad
  P\in\mathbb{R}^{n_u\times n_u},\quad
  T\in\mathbb{R}^{n_p\times n_p}.
\end{equation}
We assume that $P$ and $T$ are symmetric and positive-definite, and
that they satisfy
\begin{equation}
  \label{eq:boundsAPQT}
  \delta_{AP} \le \frac{\langle Av, v\rangle}{\langle Pv, v\rangle}
  \le \delta^{AP} \ \forall v\in\mathbb{R}^{n_u},
  \quad
  \delta_{QT} \le
  \frac{\langle(Q + C_{k})q, q \rangle}{\langle Tq, q\rangle}
  \le \delta^{QT} \ \forall q\in N^{n_p},
\end{equation}
where $\delta_{AP}$, $\delta^{AP}$, $\delta_{QT}$ and $\delta^{QT}$
are independent of $h$, but may depend on model parameters.

The discrete system in~\eqref{eq:matrixForm} is indefinite, and hence
has both positive and negative eigenvalues.  The speed of convergence
of the MINRES Krylov method for the preconditioned system
\begin{equation}
  \label{eq:precondsys}
  \begin{bmatrix}
    P & 0 \\
    0 & T
  \end{bmatrix}^{-1}
  \begin{bmatrix}
    A & B^T \\
    B & -C_{k}
  \end{bmatrix}
  \begin{bmatrix}
    u \\ p
  \end{bmatrix}=
  \begin{bmatrix}
    P & 0 \\
    0 & T
  \end{bmatrix}^{-1}
  \begin{bmatrix}
    f \\ g
  \end{bmatrix},
\end{equation}
depends on how tightly the positive and negative eigenvalues of the
generalised eigenvalue problem
\begin{equation}
  \label{eq:geneigenvalue}
  \begin{bmatrix}
    A & B^T \\
    B & -C_{k}
  \end{bmatrix}
  \begin{bmatrix}
    v \\ q
  \end{bmatrix}=
  \lambda
  \begin{bmatrix}
    P & 0 \\
    0 & T
  \end{bmatrix}
  \begin{bmatrix}
    v \\ q
  \end{bmatrix},
\end{equation}
are clustered~\citep[Section~6.2]{Elman:book}.  Our aim now is to
develop bounds on the eigenvalues in~\eqref{eq:geneigenvalue} that are
independent of the mesh parameter~$h$.

\begin{theorem}
  \label{lem:boundsPu}
  Let $c_q$ and $c^q$ be the constants in
  Lemma~\ref{lem:bounds_for_S_new}, and the matrices $A$, $B$ and
  $C_{k}$ be those given in \eqref{eq:matrixForm}, $S$ be the pressure
  Schur complement in \eqref{eq:pressure-schur} and $Q$ the pressure
  mass matrix in~\eqref{eq:pressure-mass}. If $P$ and $T$ satisfy
  \eqref{eq:boundsAPQT}, all eigenvalues $\lambda < 0$ of
  \eqref{eq:geneigenvalue} satisfy
  \begin{equation}
    \label{eq:negEigs_diag}
    - c^q \delta^{QT}
    \le
    \lambda
    \le
    \tfrac{1}{2}\del{\delta_{AP} - \sqrt{\delta_{AP}^2
        + 4 c_q \delta_{QT} \delta_{AP}}},
  \end{equation}
  and eigenvalues $\lambda > 0$ of \eqref{eq:geneigenvalue} satisfy
  \begin{equation}
    \label{eq:posEigs_diag}
    \delta_{AP}
    \le
    \lambda
    \le \delta^{AP} + c^q \delta^{QT}.
  \end{equation}
\end{theorem}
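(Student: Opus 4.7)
My plan is to expand the block eigenvalue problem \eqref{eq:geneigenvalue} into its two scalar blocks
\[
  Av + B^{T}q = \lambda P v, \qquad Bv - C_{k}q = \lambda T q,
\]
and eliminate $v$ via $v = -(A - \lambda P)^{-1}B^{T}q$ whenever $A - \lambda P$ is invertible. Taking the inner product of the second block with $q$ after this substitution gives the scalar identity
\[
  -q^{T}B(A - \lambda P)^{-1}B^{T}q - q^{T}C_{k}q = \lambda q^{T} T q,
\]
which I will bound by sandwiching $(A - \lambda P)^{-1}$ between multiples of $A^{-1}$ and then invoking Lemma~\ref{lem:bounds_for_S_new} through the identity $S = BA^{-1}B^{T} + C_{k}$. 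The degenerate case $q = 0$ is handled first: then $Av = \lambda P v$, and \eqref{eq:boundsAPQT} immediately places $\lambda$ in $[\delta_{AP},\delta^{AP}]$, which already lies inside the positive interval of \eqref{eq:posEigs_diag}. Henceforth I assume $q \neq 0$.

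For $\lambda < 0$ the matrix $A - \lambda P$ is SPD, so the elimination is legitimate. Using $\langle Av,v\rangle \le \delta^{AP}\langle Pv,v\rangle$ I obtain $(A - \lambda P)^{-1} \le \frac{\delta^{AP}}{\delta^{AP}+|\lambda|} A^{-1} \le A^{-1}$, and the scalar identity yields
\[
  |\lambda|\langle Tq,q\rangle \le q^{T}BA^{-1}B^{T}q + q^{T}C_{k}q = q^{T}Sq \le c^{q}\delta^{QT}\langle Tq,q\rangle
\]
by Lemma~\ref{lem:bounds_for_S_new} combined with \eqref{eq:boundsAPQT}, giving $\lambda \ge -c^{q}\delta^{QT}$. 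For the upper negative bound I use the opposite sandwich $(A - \lambda P)^{-1} \ge \frac{\delta_{AP}}{\delta_{AP}+|\lambda|} A^{-1}$, producing $|\lambda|\langle Tq,q\rangle \ge \frac{\delta_{AP} c_{q}\delta_{QT}}{\delta_{AP}+|\lambda|}\langle Tq,q\rangle$ and hence the quadratic inequality $|\lambda|^{2} + \delta_{AP}|\lambda| - \delta_{AP} c_{q} \delta_{QT} \ge 0$; taking its positive root and converting back to $\lambda = -|\lambda|$ reproduces exactly the right-hand side of \eqref{eq:negEigs_diag}.

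For positive eigenvalues the lower bound $\lambda \ge \delta_{AP}$ follows by contradiction: if $0 < \lambda < \delta_{AP}$ and $q \neq 0$, then $A - \lambda P$ is again SPD, so the left-hand side of the scalar identity is non-positive while the right-hand side $\lambda q^{T}Tq + q^{T}C_{k}q$ is strictly positive, which is impossible. For the upper bound I may assume $\lambda > \delta^{AP}$ (otherwise the claim is trivial), so $\lambda P - A \ge (\lambda - \delta^{AP})P$ is SPD and the analogous elimination $v = (\lambda P - A)^{-1}B^{T}q$ gives $q^{T}B(\lambda P - A)^{-1}B^{T}q = \lambda q^{T}Tq + q^{T}C_{k}q$. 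Combining $(\lambda P - A)^{-1} \le (\lambda - \delta^{AP})^{-1}P^{-1}$ with $P^{-1} \le \delta^{AP}A^{-1}$ and the chain $BA^{-1}B^{T} \le S \le c^{q}(Q+C_{k}) \le c^{q}\delta^{QT}T$ yields $\lambda^{2} - \delta^{AP}\lambda - \delta^{AP}c^{q}\delta^{QT} \le 0$; its positive root, simplified using $\sqrt{a^{2}+4ab} \le a+2b$ with $a = \delta^{AP}$ and $b = c^{q}\delta^{QT}$, gives the stated $\lambda \le \delta^{AP} + c^{q}\delta^{QT}$.

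The only real obstacle is bookkeeping: in each regime I must pick the correct direction of the Loewner sandwich for $(A - \lambda P)^{-1}$ versus a multiple of $A^{-1}$, decide whether retaining or discarding the semi-definite $q^{T}C_{k}q$ makes the resulting quadratic in $|\lambda|$ point the right way, and verify that $A - \lambda P$ is genuinely definite whenever I eliminate $v$. Once those sign choices are lined up, the proof reduces to the Schur complement equivalence in Lemma~\ref{lem:bounds_for_S_new} (equivalently, the estimate $BA^{-1}B^{T} \le c^{q}(Q+C_{k})$ underlying Lemma~\ref{lem:bounds_for_S_Q}) together with the preconditioner spectral bounds \eqref{eq:boundsAPQT}.
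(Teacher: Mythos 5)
Your proof is correct and follows essentially the same route as the paper, which simply defers to the standard saddle-point eigenvalue argument of Theorem~6.6 in \citet{Elman:book} (and \citet{Pestana:2013}); you have written out that elimination/Rayleigh-quotient argument in full, with the $C_{k}$ term correctly absorbed into the Schur complement bounds of Lemma~\ref{lem:bounds_for_S_new}. All the sign choices, the definiteness checks on $A-\lambda P$, and the final quadratic-root manipulations (including $\sqrt{a^{2}+4ab}\le a+2b$) check out.
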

\begin{proof}
  Lemmas~\ref{lem:bounds_for_S_new} and~\ref{lem:bounds_for_S_Q}
  provide the bounds
  \begin{equation}
    c_q
    \le \frac{\langle Sq, q\rangle}{\langle (Q + C_{k})q, q \rangle}
    \le c^q,
    \quad
    \frac{\langle (B^T (Q + C_{k})^{-1}B v, v\rangle}{\langle Av, v \rangle}
    \le c^q,
  \end{equation}
  for all $q \in N^{n_p}$ and $\forall v \in
  \mathbb{R}^{n_{u}}$. Using these bounds together with the bounds
  given in \eqref{eq:boundsAPQT}, the result follows directly by
  following the proof of Theorem 6.6 in~\citet{Elman:book}, or more
  generally~\citet{Pestana:2013}.
\end{proof}

The main result of this section, Theorem~\ref{lem:boundsPu}, states
that the eigenvalues of the generalised eigenvalue
problem~\eqref{eq:geneigenvalue} are independent of the problem size.
From Theorem~\ref{lem:boundsPu} we see that
\begin{equation}
  \lambda \in \sbr{-c^q \delta^{QT}, \ \frac{1}{2} \del{\delta_{AP}
    - \sqrt{\delta_{AP}^{2} + 4c_q \delta_{QT} \delta_{AP}}}}
  \bigcup \sbr{\delta_{AP}, \ \delta^{AP} + c^q \delta^{QT}},
\label{eq:lambda-interval}
\end{equation}
in which all constants are independent of the problem size
(independent of $h$). This tells us that if we can find a $P$ and $T$
that are spectrally equivalent to $A$ and $Q + C_{k}$,
respectively, then an iterative method with preconditioner
\eqref{eq:precon_ideal} will be optimal for~\eqref{eq:matrixForm}.

The interval in~\eqref{eq:lambda-interval} shows the dependence of the
eigenvalues on~$\alpha$ and~$k$. The upper and lower bounds on the
positive eigenvalues are well behaved, as is the lower bound on the
negative eigenvalues, for all $\alpha$ and $k$. It is only when $c_{q}
\ll 1$ that the upper bound on the negative eigenvalues tends to
zero. If this is the case, the rate of convergence of the iterative
method may slow. From \eqref{eq:lower_c}, we see that $c_{q} \ll 1$
only if $\alpha \gg 1$ and, at the same time,~$k_{*} \ll 1$.

\section{Preconditioner construction}
\label{s:pcconstruction}

Implementation of the proposed preconditioner requires the provision
of symmetric, positive definite matrices $P$ and $T$ that
satisfy~\eqref{eq:boundsAPQT}.  Obvious candidates are $P = A$ and $T
= Q + C_k$, with a direct solver used to compute the action of
$P^{-1}$ and~$T^{-1}$. We will use this for small problems in the
following section to study the performance of the block
preconditioning; the application of a direct solver is not practical,
however, when $P$ and $T$ are large, in which case we advocate the use
of multigrid approximations of the inverse.

To provide more general guidance, we first reproduce the following
Lemma from \citet[Lemma 6.2]{Elman:book}.
\begin{lemma}
  \label{lem:rho_bounds}
  If $u$ is the solution to the system $A u = f$ and
  \begin{equation}
    u_{i + 1} = (I - P^{-1} A) u_{i} + P^{-1} f,
  \end{equation}
  then if the iteration error satisfies $\langle A (u - u_{i + 1}), u
  - u_{i + 1} \rangle \le \rho \langle A (u - u_{i}), u - u_{i}
  \rangle$, with $\rho < 1$,
  \begin{equation}
    1 - \rho
    \le
    \frac{\langle Av, v\rangle}{\langle Pv, v\rangle}
    \le
    1 + \rho \quad \forall v.
    \label{eq:rho-convergence}
  \end{equation}
\end{lemma}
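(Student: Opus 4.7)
The plan is to recast the assumed $A$-norm error contraction as a spectral constraint on the iteration matrix $M := I - P^{-1}A$, and then translate that constraint into the Rayleigh-quotient bound~\eqref{eq:rho-convergence} via the generalised eigenvalue problem $A v = \mu P v$.

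First I would subtract the consistency identity $u = (I - P^{-1}A)u + P^{-1}f$ from the given recurrence to obtain the clean error evolution $e_{i+1} = M e_i$, with $e_i := u - u_i$. Because the stated contraction is supposed to hold from any starting iterate $u_0$, and hence for an arbitrary initial error $e_0$, the hypothesis is equivalent to the operator-level inequality
\begin{equation*}
\langle A M e, M e\rangle \le \rho \langle A e, e\rangle \qquad \forall e.
\end{equation*}

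Next I would verify that $M$ is self-adjoint with respect to the $A$-inner product $\langle \cdot, \cdot\rangle_A := \langle A\cdot,\cdot\rangle$: since $P$ is SPD, the product $A P^{-1} A$ is symmetric, and a short calculation gives $\langle M e, f\rangle_A = \langle A e, f\rangle - \langle A P^{-1} A e, f\rangle = \langle e, M f\rangle_A$. Consequently the displayed inequality is precisely the statement that every $A$-eigenvalue $\nu$ of $M$ satisfies $|\nu| \le \rho$.

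Then I would diagonalise simultaneously by introducing $\tilde A := P^{-1/2} A P^{-1/2}$, whose eigenvalues $\mu_j > 0$ coincide with the solutions of $A v = \mu P v$. The eigenvalues of $M$ are exactly $1 - \mu_j$, so the spectral bound translates into $|1 - \mu_j| \le \rho$, i.e.\ $1 - \rho \le \mu_j \le 1 + \rho$ for each $j$. Finally, since $A$ and $P$ are both SPD, the generalised Rayleigh quotient $\langle A v, v\rangle/\langle P v, v\rangle$ for arbitrary $v\ne 0$ is bracketed by $\min_j \mu_j$ and $\max_j \mu_j$; inserting the eigenvalue bound yields~\eqref{eq:rho-convergence}.

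The only delicate step is the $A$-self-adjointness of $M$, which hinges on the symmetry of $P$; once that is in hand, the rest is standard simultaneous diagonalisation of the pair $(A, P)$, and extending the bound from eigenvectors to arbitrary $v$ is a one-line Rayleigh-quotient argument.
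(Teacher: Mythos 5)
Your strategy is exactly the one behind the result the paper merely cites (Elman, Silvester and Wathen, Lemma 6.2); the paper gives no proof of its own beyond that citation. Passing to the error recurrence $e_{i+1} = M e_i$ with $M = I - P^{-1}A$, checking that $M$ is self-adjoint in the $A$-inner product, diagonalising the pair $(A,P)$ via $P^{-1/2}AP^{-1/2}$, and reading off the Rayleigh-quotient bound from $\nu = 1-\mu$ are all correct and are the standard route.

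There is, however, a genuine quantitative slip in the middle. The operator inequality you extract from the hypothesis,
\begin{equation*}
  \langle A M e, M e\rangle \le \rho\, \langle A e, e\rangle \qquad \forall e,
\end{equation*}
says $\norm{Me}_A^2 \le \rho \norm{e}_A^2$, and for an $A$-self-adjoint $M$ this is equivalent to $\nu^2 \le \rho$, i.e.\ $\abs{\nu} \le \sqrt{\rho}$ --- not $\abs{\nu} \le \rho$ as you assert. Your argument therefore only yields $1 - \sqrt{\rho} \le \langle Av,v\rangle/\langle Pv,v\rangle \le 1 + \sqrt{\rho}$, which is strictly weaker than \eqref{eq:rho-convergence} because $\sqrt{\rho} > \rho$ for $0 < \rho < 1$. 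Indeed, the scalar example $A = 1$, $P = 1/(1+\sqrt{\rho})$ satisfies the squared-norm contraction with factor exactly $\rho$, yet $\langle Av,v\rangle/\langle Pv,v\rangle = 1+\sqrt{\rho} > 1+\rho$, so the lemma as literally transcribed cannot be proved. The discrepancy originates in the hypothesis: Elman, Silvester and Wathen state the contraction in the energy norm itself, $\norm{u - u_{i+1}}_A \le \rho \norm{u - u_i}_A$, whose squared form carries $\rho^2$ rather than $\rho$ on the right. With that (intended) hypothesis your chain of reasoning gives $\abs{\nu} \le \rho$ directly and the proof closes. So either start from the unsquared contraction, or weaken the conclusion to $1 \pm \sqrt{\rho}$; as written, the step ``the displayed inequality is precisely the statement that every $A$-eigenvalue of $M$ satisfies $\abs{\nu}\le\rho$'' is false.
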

\begin{proof}
  See \citet[proof of Lemma 6.2]{Elman:book}.
\end{proof}

Lemma~\ref{lem:rho_bounds} implies that a solver that is optimal for
$A u = f$ will satisfy~\eqref{eq:boundsAPQT}, and is therefore a
candidate for $P$, and likewise for~$T$. The obvious candidates for
$P$ and $T$ are multigrid preconditioners applied to $A$ and $Q +
C_k$, respectively. However, as we will show by example in
Section~\ref{s:numsim}, as $\alpha$ increases, and therefore the
compaction stresses (a `grad-div' term) become more important,
multigrid for $P$ becomes less effective as a preconditioner.  More
effective treatment of the large $\alpha$ case is the subject of
ongoing investigations.

\section{Numerical simulations}
\label{s:numsim}

In this section we verify the analysis results through numerical
examples. In all test cases we use $P^{2}$--$P^{1}$ Taylor--Hood
finite elements on simplices. The numerical examples deliberately
address points of practical interest such as spatial variations in the
parameter $k$, a wide range of values for $\alpha$ and large problem
sizes on unstructured grids of subduction zone-like geometries.

We consider two preconditioners. For the first, we take $P = A$ and $T
= Q + C_{k}$ in~\eqref{eq:precon_ideal} and apply a direct solver to
compute the action of the inverses.  This preconditioner will be
referred to as the `LU' preconditioner.  For the second, we use
$P^{-1} = A^{\rm AMG}$ and $T^{-1} =(Q + C_{k})^{\rm AMG}$, where we use
$(\cdot)^{\rm AMG}$ to denote the use of algebraic multigrid to
approximate the inverse of~$(\cdot)$.  This preconditioner will be
referred to as the `AMG' preconditioner.  The LU preconditioner is
introduced as a reference preconditioner to which the AMG
preconditioner can be compared. The LU preconditioner is not suitable
for large scale problems. Note that we never construct the inverse of
$P$ or $T$, but that we just use the action of the inverse.

All tests use the MINRES method, and the solver is terminated once a
relative true residual of $10^{-8}$ is reached.  For multigrid
approximations of $P^{-1}$, smoothed aggregation algebraic multigrid
is used via the library ML~\citep{gee:2006}. For multigrid
approximations of $T^{-1}$, classical algebraic multigrid is used via
the library BoomerAMG~\citep{henson:2002}.  Unless otherwise stated,
we use multigrid V-cycles, with two applications of Chebyshev with
Jacobi smoothing on each level (pre and post) in the case of smoothed
aggregation, and symmetric Gauss--Seidel for the classical algebraic
multigrid. The computer code is developed using the finite element
library DOLFIN~\citep{logg:2010}, with block preconditioner support
from PETSc~\citep{brown:2012} to construct the preconditioners.  The
computer code to reproduce all examples is freely available in the
supporting material~\citep{supporting-material}.

\subsection{Verification of optimality}
\label{ss:tc1}

In this test case we verify optimality of the block preconditioned
MINRES scheme by observing the convergence of the solver for varying
$h$, $\alpha$, $k^{*}$ and $k_{*}$. We solve \eqref{eq:magma} and
\eqref{eq:bc} on the unit square domain $\Omega = (0, 1)^{2}$ using a
regular mesh of triangular cells. For the permeability, we consider
\begin{multline}
  k =
  \frac{k^{*} - k_{*}}{4\tanh(5)}
  \left(\tanh(10 x - 5)
    + \tanh(10z - 5) \right.
\\
\left.
    +  \frac{2(k^{*} - k_{*})
      - 2 \tanh(5)(k_{*}
      + k^{*})}{k_{*} - k^{*}}
  + 2\right).
\end{multline}
We ignore body forces but add a source term $\mathbf{f}$ to the right
hand side of~\eqref{eq:magma_a}. The Dirichlet boundary condition
$\mathbf{g}$ and the source term $\mathbf{f}$ are constructed such
that the exact solution pressure $p$ and velocity $\mathbf{u}$ are:
\begin{align}
 p     &= -\cos(4 \pi x)\cos(2 \pi z),
 \\
  u_{x} &= k \partial_{x} p + \sin(\pi x)\sin(2\pi z) + 2,
  \\
  u_{z} &= k \partial_{z} p +  \frac{1}{2}\cos(\pi x)\cos(2\pi z) + 2.
\end{align}

Table~\ref{tab:tc1_minres_mu} shows the number of iterations the
MINRES method required to converge using the LU and AMG
preconditioners with $k_{*} = 0.5$ and $k^{*} = 1.5$, when varying
$\alpha$ from $-1/3$ to $1000$. We clearly see that the LU
preconditioner is optimal (the iteration count is independent of the
problem size), as predicted by the analysis (see
Theorem~\ref{lem:boundsPu}). Using the AMG preconditioner, there is a
very slight dependence on the problem size.  The results in
Table~\ref{tab:tc1_minres_mu} indicate that the LU preconditioner is
uniform with respect to~$\alpha$. Theorem~\ref{lem:boundsPu} indicates
a possible dependence on~$\alpha$ through the constant~$c_q$. However,
for $\alpha$ sufficiently small or sufficiently large, the dependence
of $c_{q}$ on $\alpha$ becomes negligible, and $\alpha$ has only a
small impact on the iteration count. The AMG preconditioner, on the
other hand, shows a strong dependence on~$\alpha$.  The issue with the
`grad-div' for multigrid solvers was discussed in Section~\ref{s:fem},
and is manifest in Table~\ref{tab:tc1_minres_mu}.  It has been
observed in tests that the effectiveness of a multigrid preconditioned
solver for the operator $A$ deteriorates with increasing~$\alpha$.
This is manifest in an increasing $\rho$ in~\eqref{eq:rho-convergence}
for increasing~$\alpha$.

\begin{table}
  \caption{Number of iterations for the LU and AMG preconditioned
    MINRES for the unit square test with different levels of mesh
    refinement and for different values of~$\alpha$. The number of
    degrees-of-freedom is denoted by~$N$.  For the $\alpha = 1000$
    case, four applications of a Chebyshev smoother, with one
    symmetric Gauss-Seidel iteration for each application, was used.}
\begin{center}
\begin{tabular}{c|cc|cc|cc|cc|cc}
& \multicolumn{2}{c|}{$\alpha=-\tfrac{1}{3}$} & \multicolumn{2}{c|}{$\alpha=0$} & \multicolumn{2}{c|}{$\alpha=1$} & \multicolumn{2}{c|}{$\alpha=10$} & \multicolumn{2}{c}{$\alpha=1000$}\\
$N$      & LU & AMG   & LU & AMG     & LU & AMG     & LU & AMG    & LU & AMG$^{*}$ \\
\hline
  9,539  & 9  & 29    & 9  & 30      & 9  & 35      & 8 & 67      & 7  & 202 \\
 37,507  & 9  & 33    & 9  & 36      & 9  & 40      & 8 & 80      & 6  & 283 \\
148,739  & 8  & 39    & 8  & 40      & 9  & 47      & 7 & 96      & 6  & 366 \\
592,387  & 8  & 42    & 8  & 44      & 7  & 52      & 7 & 106     & 6  & 432
\end{tabular}
\label{tab:tc1_minres_mu}
\end{center}
\end{table}

Results for the case of large spatial variations in permeability $k$
are presented in Tables~\ref{tab:tc1_minres_k_a}
and~\ref{tab:tc1_minres_k_b} for the cases~$\alpha = 1$ and~$\alpha =
100$, respectively. A dependence of the iteration count on the
permeability is observed. The smaller $k^{*}$, the larger the
iteration counts for both the AMG and the LU preconditioners. We also
observe that for a given $k^{*}$ there is little influence of $k_{*}$
on the iteration count. Comparing the results in
Tables~\ref{tab:tc1_minres_k_a} and~\ref{tab:tc1_minres_k_b} we see
that the LU preconditioner shows no dependence on $\alpha$. For the AMG
preconditioner the iteration count increases as $\alpha$ increases
from 1 to 100.

\begin{table}
  \caption{Number of iterations to reach a relative tolerance of
    $10^{-8}$ using preconditioned MINRES for the unit square test
    with varying levels of mesh refinement and varying $(k_{*},
    k^{*})$ pairs for $\alpha=1$. The number of degrees of freedom is
    denoted by~$N$.}
\begin{center}
\begin{tabular}{c|cc|cc|cc|cc}
$k^*=10^{-4}$ & \multicolumn{2}{c|}{$k_{*} = 0$}
       & \multicolumn{2}{c|}{$k_{*} = 10^{-8}$}
       & \multicolumn{2}{c}{$k_{*} = 10^{-6}$}
       & \multicolumn{2}{c}{$k_{*} = 5\cdot 10^{-5}$}\\
$N$     & LU & AMG  & LU & AMG  & LU & AMG  & LU & AMG \\
\hline
 9,539  & 32 & 88   & 32 & 88   & 32 & 88   & 32 & 80\\
 37,507 & 35 & 108  & 35 & 108  & 35 & 108  & 35 & 97\\
148,739 & 38 & 130  & 37 & 130  & 38 & 127  & 33 & 111\\
592,387 & 36 & 143  & 36 & 143  & 35 & 135  & 33 & 122\\
\multicolumn{9}{c}{}\\
$k^*=1$ & \multicolumn{2}{c|}{$k_{*} = 0$}
       & \multicolumn{2}{c|}{$k_{*} = 0.1$}
       & \multicolumn{2}{c}{$k_{*} = 0.5$}
       & \multicolumn{2}{c}{$k_{*} = 0.9$}\\
$N$      & LU & AMG   & LU & AMG  & LU & AMG  & LU & AMG\\
\hline
 9,539   & 27 & 67    & 10 & 37   & 9 & 36    & 9 & 36\\
 37,507  & 28 & 78    & 10 & 44   & 9 & 42    & 9 & 42\\
148,739  & 28 & 93    & 10 & 50   & 9 & 48    & 7 & 47\\
592,387  & 27 & 101   & 10 & 54   & 9 & 52    & 7 & 52\\
\multicolumn{9}{c}{}\\
$k^*=1000$ & \multicolumn{2}{c|}{$k_{*} = 0$}
       & \multicolumn{2}{c|}{$k_{*} = 1$}
       & \multicolumn{2}{c}{$k_{*} = 10$}
       & \multicolumn{2}{c}{$k_{*} = 100$}\\
$N$     & LU & AMG  & LU & AMG  & LU & AMG & LU & AMG\\
\hline
 9,539  & 3 & 24    & 3 & 26    & 3 & 24   & 3 & 24\\
 37,507 & 3 & 27    & 3 & 27    & 3 & 27   & 3 & 30\\
148,739 & 3 & 34    & 3 & 33    & 3 & 34   & 3 & 33\\
592,387 & 3 & 37    & 3 & 37    & 3 & 37   & 3 & 40\\
\multicolumn{9}{c}{}\\
$k^*=10^{8}$ & \multicolumn{2}{c|}{$k_{*} = 0$}
       & \multicolumn{2}{c|}{$k_{*} = 1$}
       & \multicolumn{2}{c}{$k_{*} = 10^{3}$}
       & \multicolumn{2}{c}{$k_{*} = 10^{6}$}\\
$N$      & LU & AMG  & LU & AMG  & LU & AMG & LU & AMG\\
\hline
 9,539   & 1  & 15   & 1  & 15   & 1  & 15   & 1  & 15\\
 37,507  & 2  & 18   & 2  & 18   & 2  & 18   & 2  & 18\\
148,739  & 2  & 21   & 2  & 21   & 2  & 21   & 2  & 21\\
592,387  & 2  & 21   & 2  & 21   & 2  & 21   & 2  & 21
\end{tabular}
\label{tab:tc1_minres_k_a}
\end{center}
\end{table}

\begin{table}
  \caption{Number of iterations to reach a relative tolerance of
    $10^{-8}$ using preconditioned MINRES for the unit square test
    with varying levels of mesh refinement and varying $(k_{*},
    k^{*})$ pairs for $\alpha=100$. The number of degrees of freedom is
    denoted by~$N$.}
\begin{center}
\begin{tabular}{c|cc|cc|cc|cc}
$k^*=10^{-4}$ & \multicolumn{2}{c|}{$k_{*} = 0$}
       & \multicolumn{2}{c|}{$k_{*} = 10^{-8}$}
       & \multicolumn{2}{c}{$k_{*} = 10^{-6}$}
       & \multicolumn{2}{c}{$k_{*} = 5\cdot 10^{-5}$}\\
$N$     & LU  & AMG   & LU  & AMG   & LU  & AMG   & LU & AMG \\
\hline
 9,539  & 67  & 1605  & 67  & 1598  & 66  & 1557  & 58  & 1385\\
 37,507 & 75  & 1922  & 75  & 1922  & 71  & 1909  & 62  & 1730\\
148,739 & 76  & 2179  & 76  & 2177  & 72  & 2146  & 59  & 1972\\
592,387 & 73  & 2356  & 73  & 2356  & 68  & 2311  & 59  & 2156\\
\multicolumn{9}{c}{}\\
$k^*=1$ & \multicolumn{2}{c|}{$k_{*} = 0$}
       & \multicolumn{2}{c|}{$k_{*} = 0.1$}
       & \multicolumn{2}{c}{$k_{*} = 0.5$}
       & \multicolumn{2}{c}{$k_{*} = 0.9$}\\
$N$     & LU & AMG  & LU & AMG  & LU & AMG  & LU & AMG\\
\hline
 9,539  & 28 & 350  & 9 & 179   & 8  & 171  & 7 & 169\\
 37,507 & 28 & 445  & 9 & 212   & 8  & 205  & 8 & 202\\
148,739 & 28 & 545  & 9 & 247   & 8  & 236  & 8 & 234\\
592,387 & 28 & 597  & 9 & 271   & 8  & 265  & 8 & 265\\
\multicolumn{9}{c}{}\\
$k^*=1000$ & \multicolumn{2}{c|}{$k_{*} = 0$}
       & \multicolumn{2}{c|}{$k_{*} = 1$}
       & \multicolumn{2}{c}{$k_{*} = 10$}
       & \multicolumn{2}{c}{$k_{*} = 100$}\\
$N$      & LU & AMG  & LU & AMG  & LU & AMG  & LU & AMG\\
\hline
 9,539  & 3   & 75   & 3  & 75   & 3  & 75   & 3  & 75\\
 37,507 & 3   & 94   & 3  & 94   & 3  & 94   & 3  & 94\\
148,739 & 3   & 116  & 3  & 116  & 3  & 116  & 3  & 116\\
592,387 & 3   & 139  & 3  & 139  & 3  & 139  & 3  & 139\\
\multicolumn{9}{c}{}\\
$k^*=10^{8}$ & \multicolumn{2}{c|}{$k_{*} = 0$}
       & \multicolumn{2}{c|}{$k_{*} = 1$}
       & \multicolumn{2}{c}{$k_{*} = 10^{3}$}
       & \multicolumn{2}{c}{$k_{*} = 10^{6}$}\\
$N$     & LU & AMG  & LU & AMG  & LU & AMG & LU & AMG\\
\hline
 9,539  & 1  & 11   & 1  & 11   & 1  & 11  & 1  & 11\\
 37,507 & 1  & 13   & 1  & 13   & 1  & 13  & 1  & 13\\
148,739 & 1  & 20   & 1  & 20   & 1  & 20  & 1  & 20\\
592,387 & 1  & 23   & 1  & 23   & 1  & 23  & 1  & 23
\end{tabular}
\label{tab:tc1_minres_k_b}
\end{center}
\end{table}

\subsection{A magma dynamics problem in two dimensions}
\label{ss:tc2}

In this test case we solve \eqref{eq:magma} and \eqref{eq:bc} on a
domain $\Omega$, depicted in Figure~\ref{fig:2Dsubduction}, using
unstructured meshes with triangular cells.  We take $L_{x}^{t} = 1.5$,
$L_{x}^{b} = 0.5$ and $L_{z} = 1$. We set the permeability as
$k = 0.9(1 + \tanh(-2r))$ with $r = \sqrt{x^{2} + z^{2}}$
and the porosity~$\phi = 0.01$.
\begin{figure}
  \centering
  \includegraphics[width=0.45\textwidth]{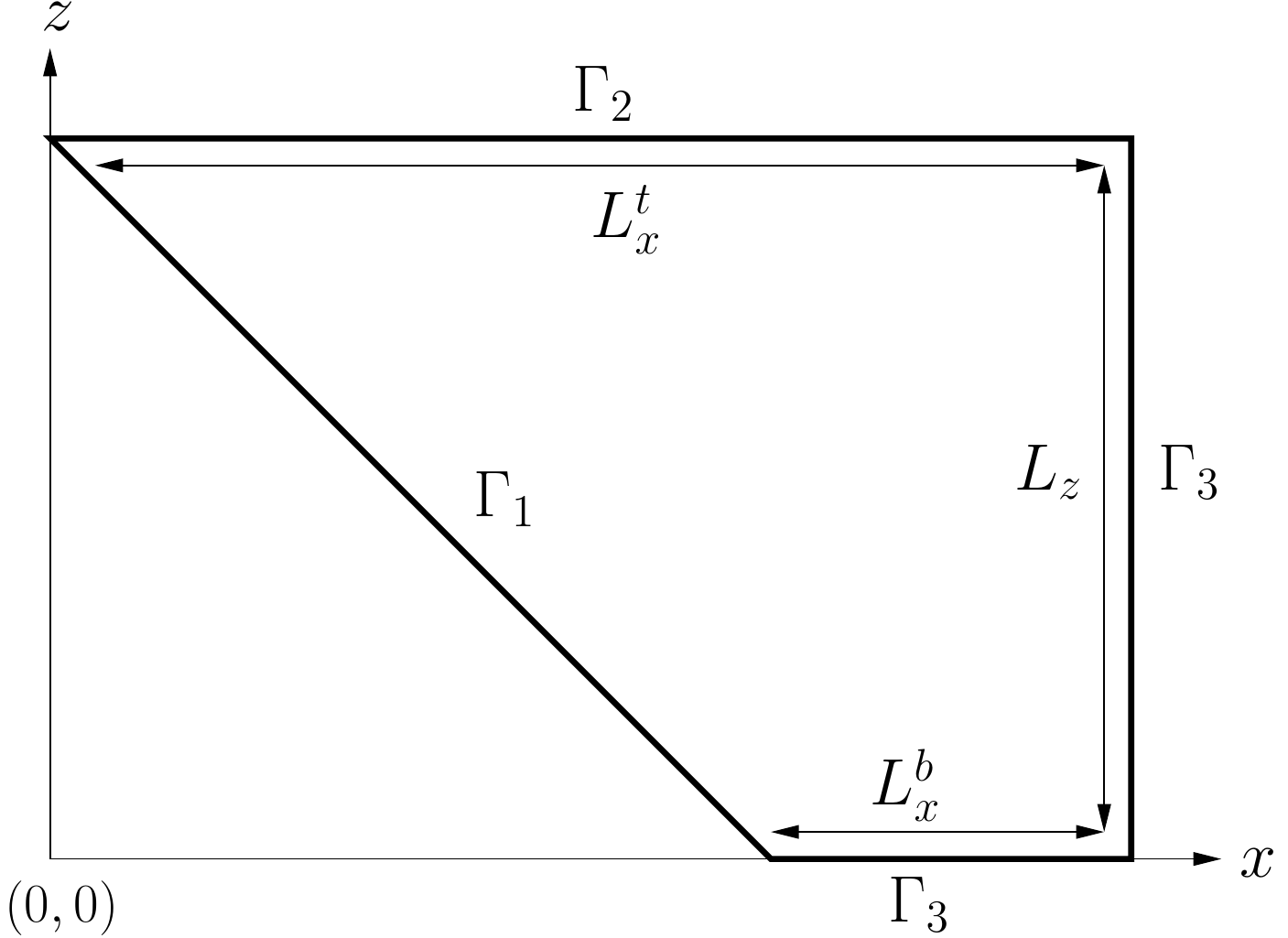}
  \caption{Description of the wedge geometry for a two-dimensional
    subduction zone.}
  \label{fig:2Dsubduction}
\end{figure}

We consider two test cases for this geometry. The first test problem
we denote as the \emph{analytical corner flow} test problem and the
second as the \emph{traction-free} test problem. In both problems we
prescribe the following conditions: $\mathbf{u} =
\mathbf{u}_{\text{slab}} = (1, -1)/\sqrt{2}$ on $\Gamma_1$,
$\mathbf{u} = \mathbf{0}$ on $\Gamma_2$ and $-k\del{\nabla p -
  \mathbf{e}_{3}} \cdot \mathbf{n} = 0$ on $\partial \Omega$.

\subsubsection{Analytic corner flow}
\label{ss:corner}

For the analytical corner flow problem we prescribe $\mathbf{u} =
\mathbf{u}_{\rm corner} = (u_{x}, u_{z})$ on $\Gamma_{3}$, which is
the analytic expression for
corner-flow~\citep[Section~4.8]{Batchelor:book}. The corner-flow
velocity components $u_{x}$ and $u_{z}$ are given by
\begin{equation}
  u_{x} = \cos(\theta) u_{r}  + \sin(\theta)u_{\theta},
  \quad
  u_{z} = -\sin(\theta) u_{r} + \cos(\theta)u_{\theta},
\end{equation}
where $\theta = -\arctan(\tilde{z}/x)$, $\tilde{z} = z - 1$ and
\begin{equation}
  u_r = C \theta \sin(\theta) + D(\sin(\theta) + \theta\cos(\theta)),
  \quad
  u_{\theta} = C(\sin(\theta) - \theta\cos(\theta)) + D\theta\sin(\theta),
\end{equation}
with
\begin{equation}
  C = \frac{\beta\sin(\beta)}{\beta^{2} - \sin^{2}(\beta)},
  \quad
  D = \frac{\beta\cos(\beta) - \sin(\beta)}{\beta^{2} - \sin^{2}(\beta)}.
\end{equation}
Here $\beta = \pi/4$ is the angle between $\Gamma_{1}$
and~$\Gamma_{2}$. In Figure~\ref{fig:2Dsimulation} we show the
computed streamlines of the magma and matrix velocity fields for this
problem.
\begin{figure}
\centering
\subfloat[S][$\alpha=1$]{
\includegraphics[width=0.4\textwidth]{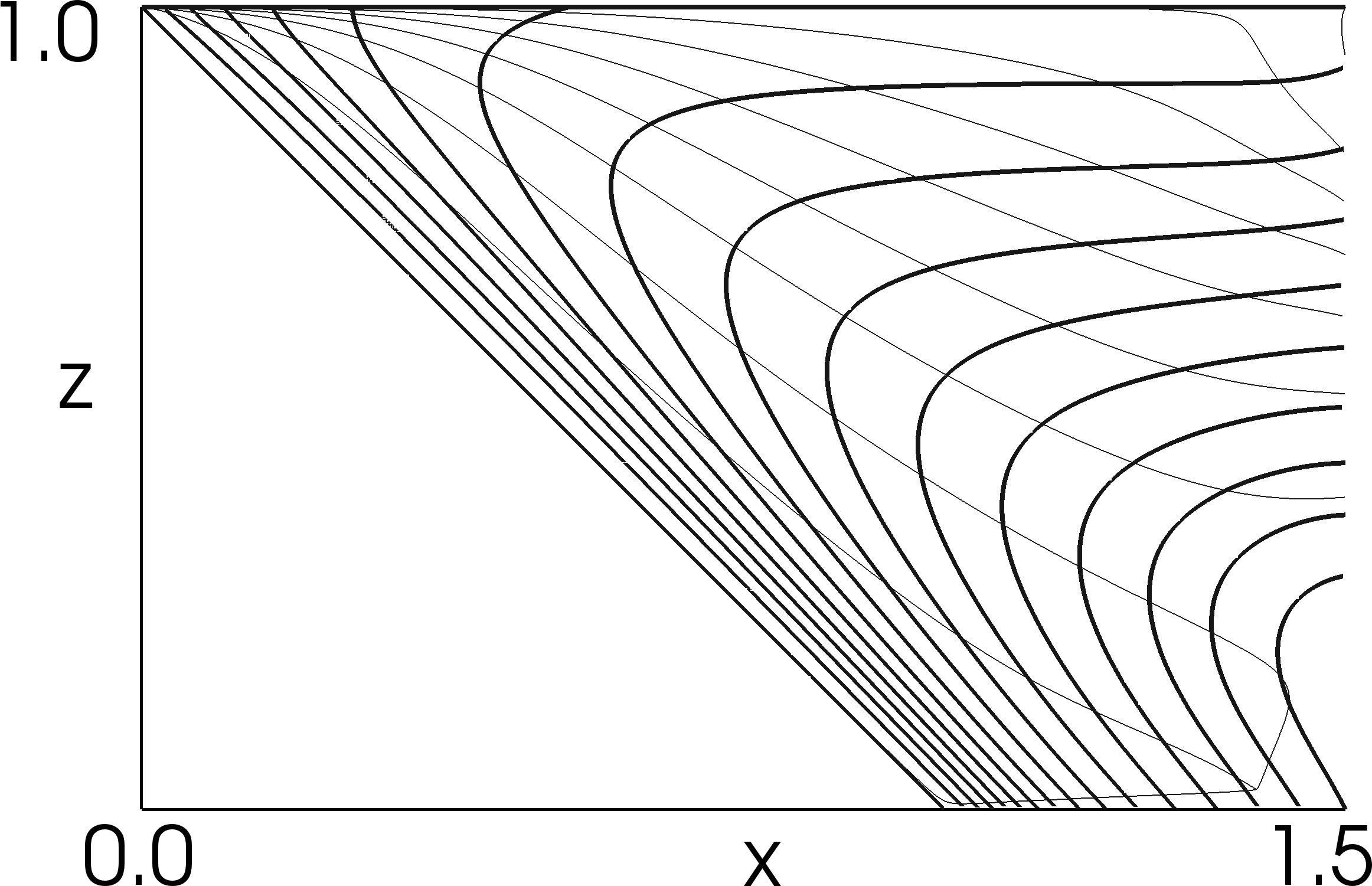}
\label{fig:2Dsimulationa}}
\
\subfloat[S][$\alpha=1000$]{
\includegraphics[width=0.4\textwidth]{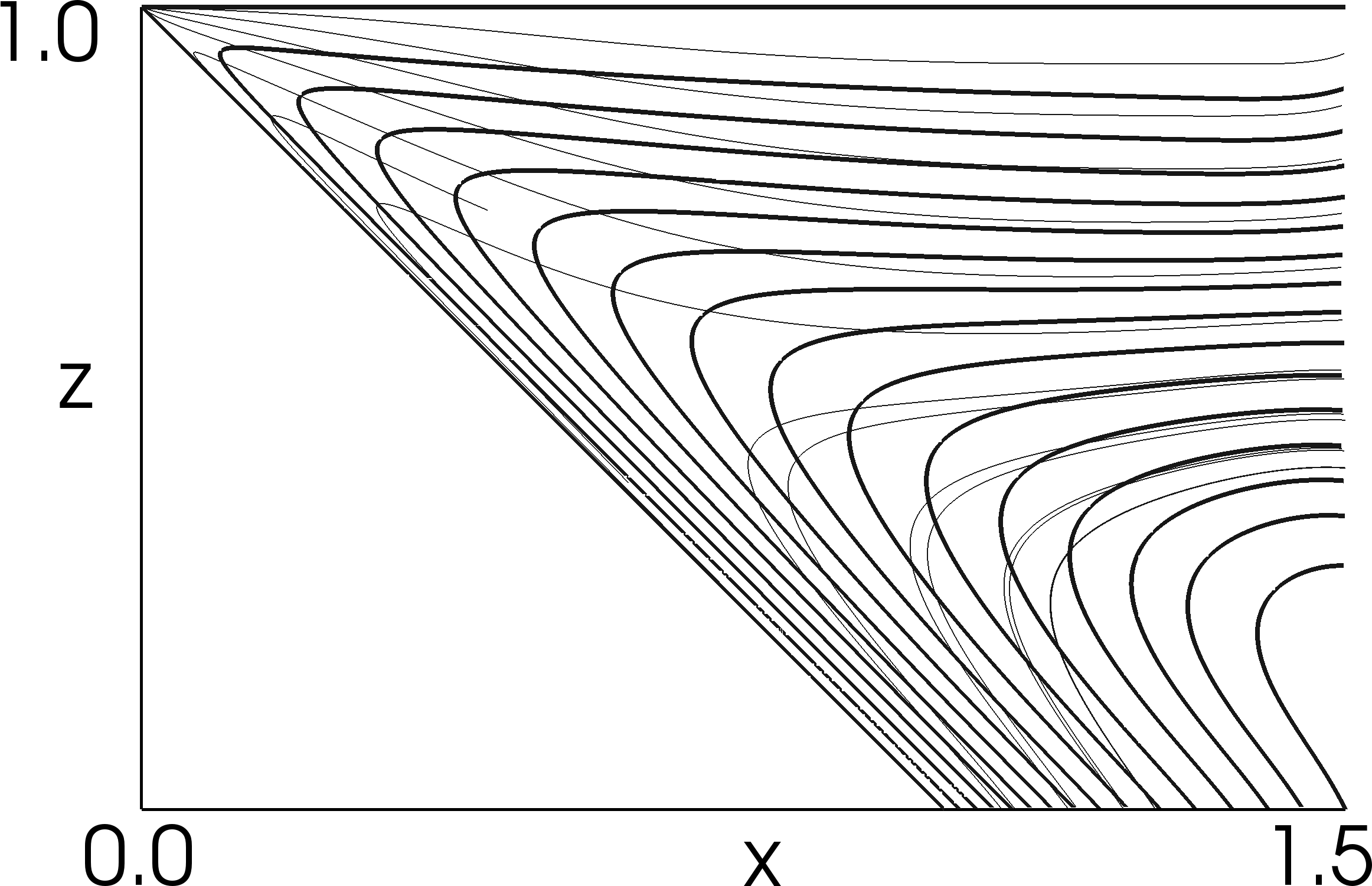}
\label{fig:2Dsimulationb}}
\caption{Streamlines of the magma (light) and matrix (dark) velocity
  fields in the wedge of a two-dimensional subduction zone using the
  corner flow boundary condition on $\Gamma_{3}$. The solution was
  computed on a mesh with 116176 elements.}
\label{fig:2Dsimulation}
\end{figure}

Table~\ref{tab:corner-2d} presents the number of solver iterations for
the LU and AMG preconditioners for different values of~$\alpha$.  We
observe very similar behaviour as we saw for the test in
Section~\ref{ss:tc1}.  The LU preconditioner is optimal and
uniform. The AMG preconditioner again shows slight dependence on the
problem size, and as $\alpha$ is increased the iteration count grows.

\begin{table}
  \caption{Number of iterations required for the corner flow problem
    using LU and AMG preconditioned MINRES for different levels of
    mesh refinement and varying~$\alpha$. For the $\alpha = 1000$
    case, four applications of a Chebyshev smoother, with one
    symmetric Gauss-Seidel iteration for each application, was used.}
\begin{center}
\begin{tabular}{c|cc|cc|cc|cc}
  & \multicolumn{2}{c|}{$\alpha=1$} & \multicolumn{2}{c|}{$\alpha=10$}
  & \multicolumn{2}{c|}{$\alpha=100$} & \multicolumn{2}{c}{$\alpha=1000$} \\
$N$        & LU & AMG   & LU & AMG   & LU & AMG   & LU & AMG$^{*}$ \\
\hline
 34,138     & 26 & 69    & 30 & 140    & 30 & 367   & 28 & 572 \\
 133,777    & 26 & 75    & 29 & 151    & 27 & 390   & 27 & 669 \\
 526,719    & 24 & 81    & 29 & 171    & 26 & 446   & 27 & 758 \\
\end{tabular}
\label{tab:corner-2d}
\end{center}
\end{table}

\subsubsection{Traction-free problem}

For the traction-free problem, instead of prescribing $\mathbf{u}_{\rm
  corner}$, we prescribe the zero-traction boundary condition,
$(\boldsymbol{\epsilon}(\mathbf{u}) -p \mathbf{I} + \alpha \nabla
\cdot \mathbf{u} \mathbf{I}\big) \cdot \mathbf{n} = 0$ on $\Gamma_3$.
Figure~\ref{fig:2Dsimulation_nostress} shows the computed streamlines
of the magma and matrix velocity fields for this problem.
\begin{figure}
\centering
\subfloat[S][$\alpha=1$]{
\includegraphics[width=0.4\textwidth]{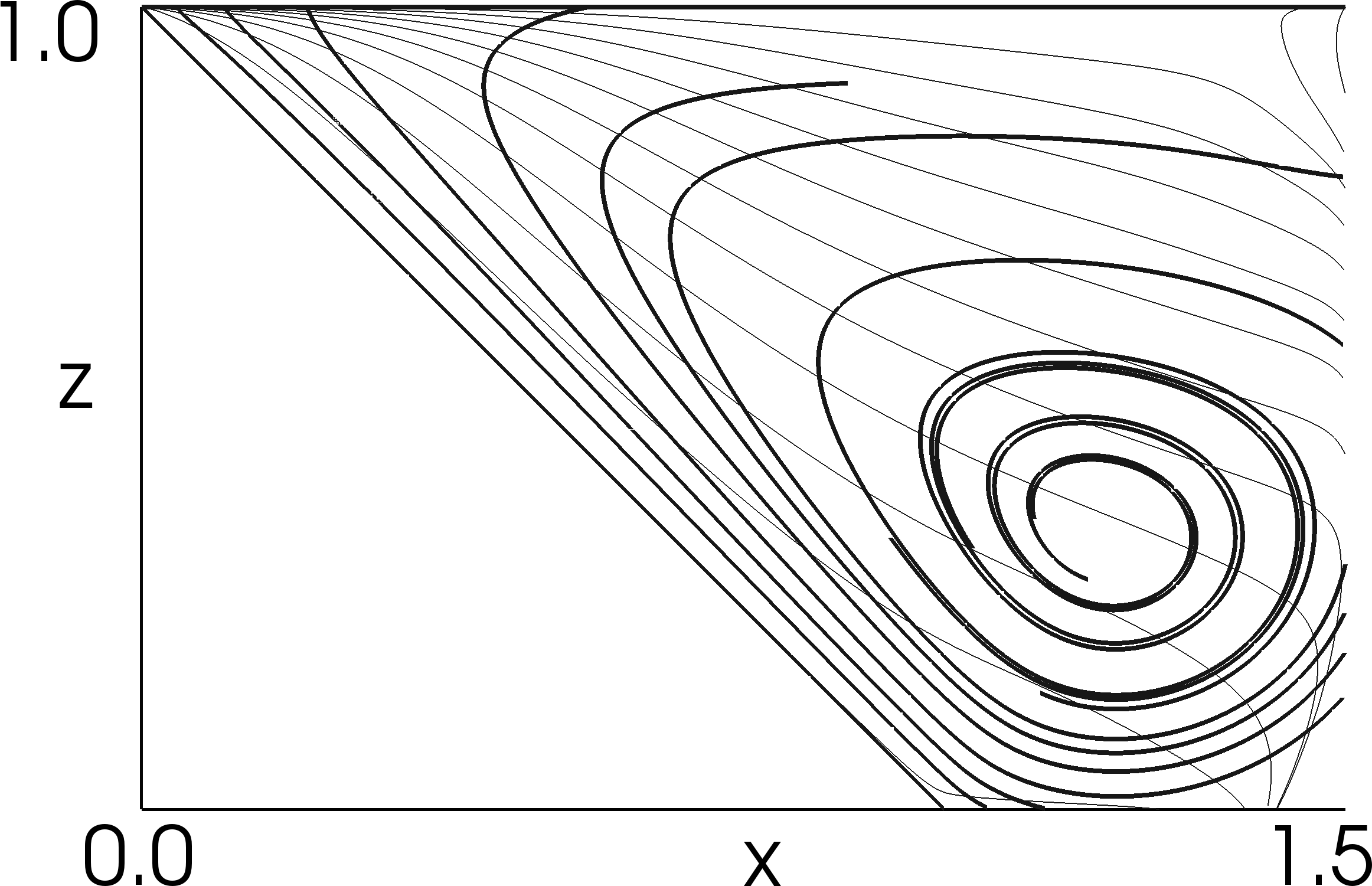}
\label{fig:2Dsimulationa_nostress}}
\
\subfloat[S][$\alpha=1000$]{
\includegraphics[width=0.4\textwidth]{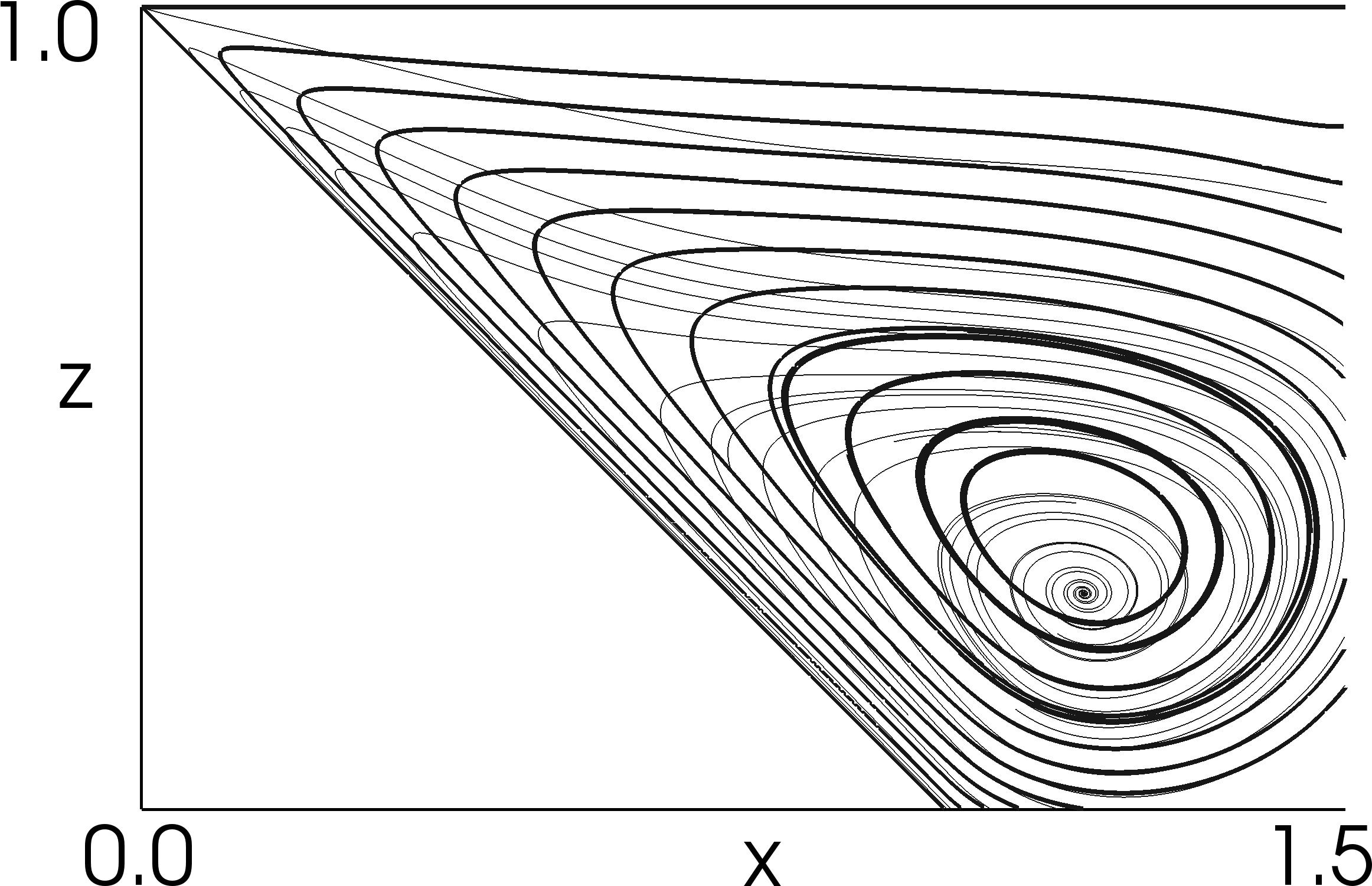}
\label{fig:2Dsimulationb_nostress}}
\caption{Streamlines of the magma (light) and matrix (dark) velocity
  fields in the wedge of a 2D subduction zone using no stress boundary
  conditions on~$\Gamma_{3}$. The solution was computed on a mesh with
  116176 elements.}
\label{fig:2Dsimulation_nostress}
\end{figure}

The solver iteration counts for this problem with different levels of
mesh refinement and for different values of $\alpha$ are presented in
Table~\ref{tab:no-stress-2d}. As for the analytic corner flow problem
of Section~\ref{ss:corner}, the LU-based preconditioner is optimal and
uniform. As expected, using the AMG-based preconditioner, the solver
is not uniform with respect to~$\alpha$.

\begin{table}
  \caption{Number of iterations to reach a relative tolerance of
    $10^{-8}$ using LU and AMG preconditioned MINRES for different
    values of $\alpha$ for the no-stress test.  For the $\alpha =
    1000$ case, four applications of a Chebyshev smoother, with one
    symmetric Gauss-Seidel iteration for each application, was used.}
\begin{center}
\begin{tabular}{c|cc|cc|cc|cc}
  & \multicolumn{2}{c|}{$\alpha=1$} & \multicolumn{2}{c|}{$\alpha=10$} & \multicolumn{2}{c|}{$\alpha=100$} & \multicolumn{2}{c}{$\alpha=1000$} \\
$N$         & LU & AMG   & LU & AMG   & LU & AMG   & LU & AMG$^{*}$ \\
\hline
 34,138     & 24 & 65    & 29 & 143   & 27 & 375   & 25 & 626 \\
 133,777    & 23 & 73    & 27 & 159   & 27 & 424   & 24 & 718 \\
 526,719    & 23 & 80    & 26 & 175   & 27 & 475   & 24 & 798
\end{tabular}
\label{tab:no-stress-2d}
\end{center}
\end{table}

\subsection{Magma dynamics problem in three dimensions}
\label{ss:tc3}

In the final case we test the solver for a three-dimensional problem
that is geometrically representative of a subduction zone.  We solve
\eqref{eq:magma} and \eqref{eq:bc} on the domain $\Omega$ depicted in
Figure~\ref{fig:3Dsubduction}. We set $L_{x}^{t} = 1.5$, $L_{x}^{b} =
0.5$, $L_{y} = 1$ and~$L_{z} = 1$, and use unstructured meshes of
tetrahedral cells.  Again we set the permeability as $k = 0.9(1 +
\tanh(-2r))$, with $r = \sqrt{x^{2} + z^{2}}$, and the porosity~$\phi
= 0.01$.
\begin{figure}
\centering
\includegraphics[width=0.5\textwidth]{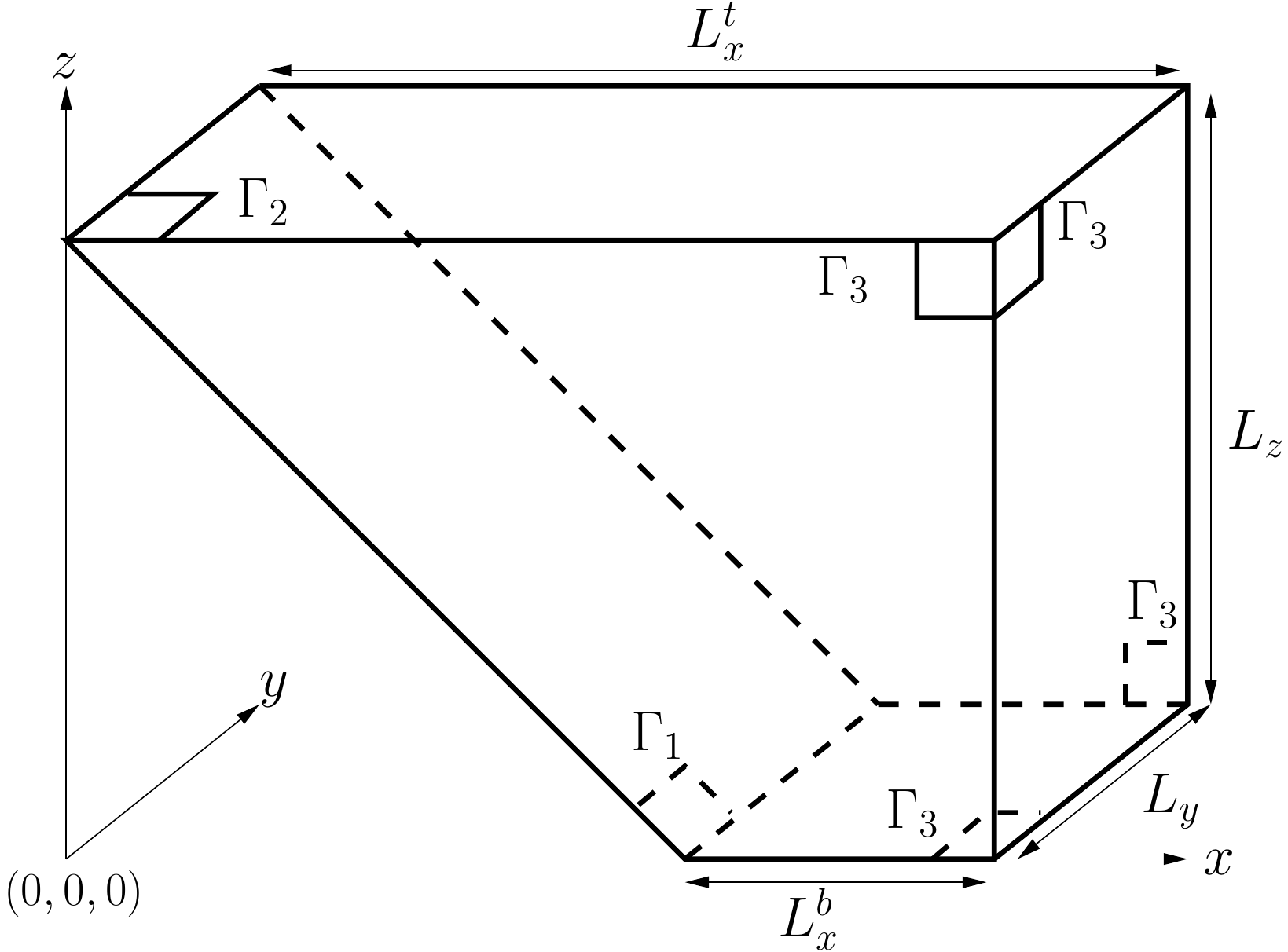}
\caption{Description of the wedge in a three-dimensional subduction
  zone.}
\label{fig:3Dsubduction}
\end{figure}

As boundary conditions, we prescribe $\mathbf{u} = \mathbf{u}_{\rm
  slab} = (1, 0.1, -1)/\sqrt{2}$ on $\Gamma_1$, $\mathbf{u} =
\mathbf{0}$ on $\Gamma_{2}$, $\big(\boldsymbol{\epsilon}(\mathbf{u}) -
p \mathbf{I} + \alpha \nabla \cdot \mathbf{u} \mathbf{I} \big) \cdot
\mathbf{n} = 0$ on $\Gamma_3$ and $-k \del{\nabla p - \mathbf{e}_{3}}
\cdot \mathbf{n} = 0$ on $\partial \Omega$. In
Figure~\ref{fig:3Dsimulation} we show computed vector plots of the
matrix and magma velocities for $\alpha = 1$ and~$\alpha = 1000$.
\begin{figure}
\centering
\subfloat[S][Matrix velocity, $\alpha = 1$.]{
\includegraphics[width=0.4\textwidth]{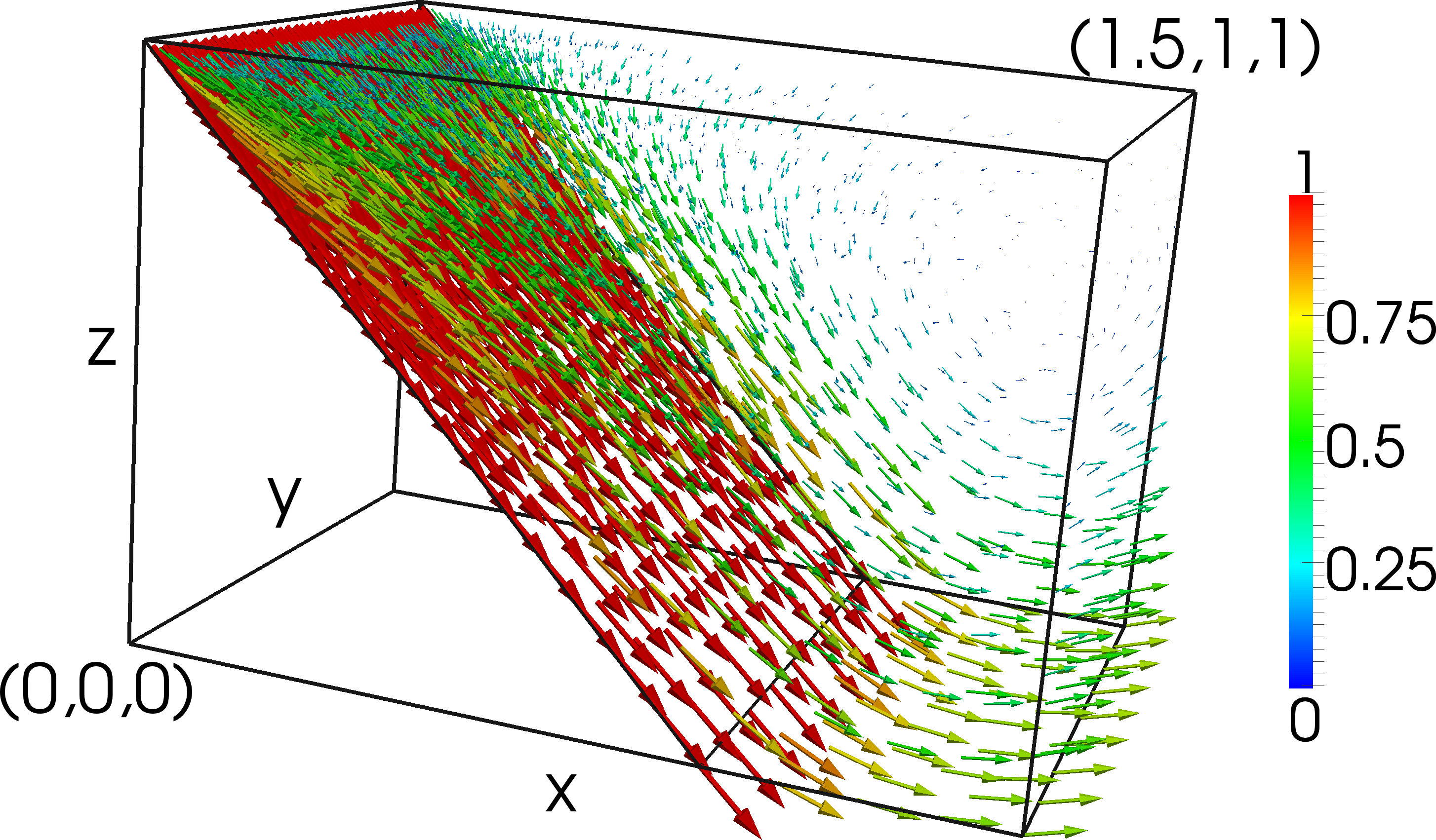}
\label{fig:3Dsimulationa}}
\
\subfloat[S][Magma velocity, $\alpha = 1$.]{
\includegraphics[width=0.4\textwidth]{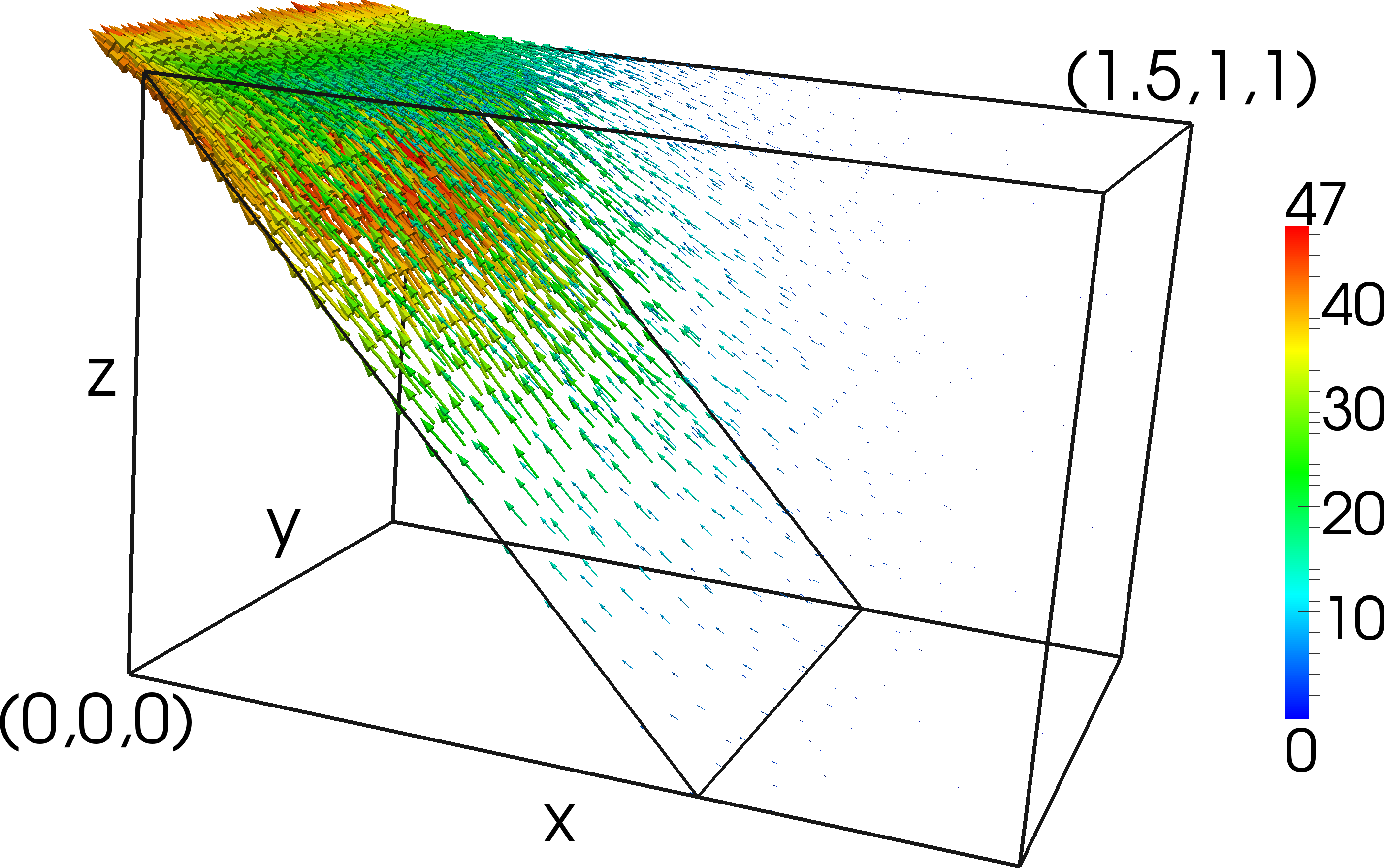}
\label{fig:3Dsimulationb}}
\\
\subfloat[S][Matrix velocity, $\alpha = 1000$.]{
\includegraphics[width=0.4\textwidth]{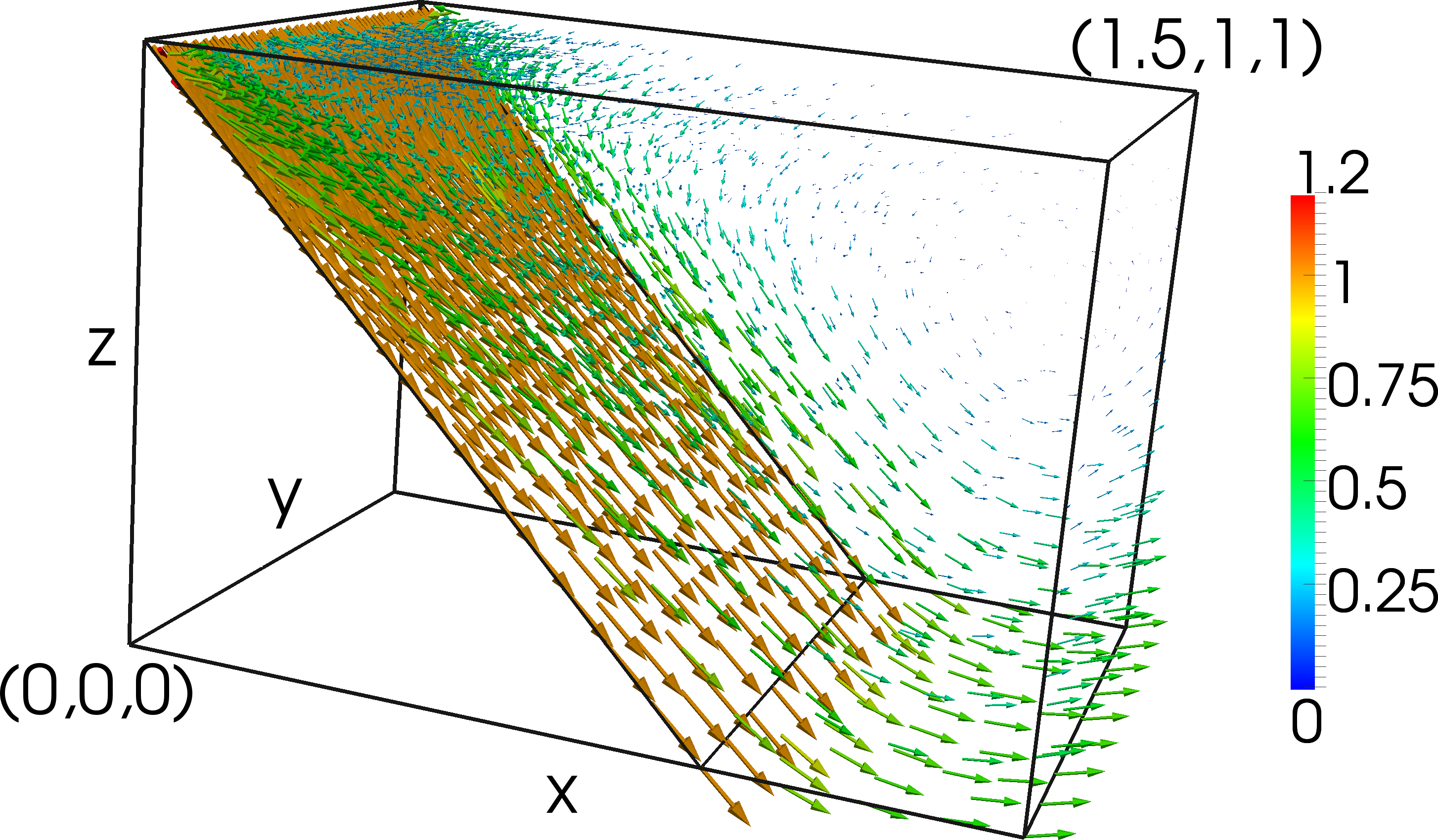}
\label{fig:3Dsimulationc}}
\
\subfloat[S][Magma velocity, $\alpha = 1000$.]{
\includegraphics[width=0.4\textwidth]{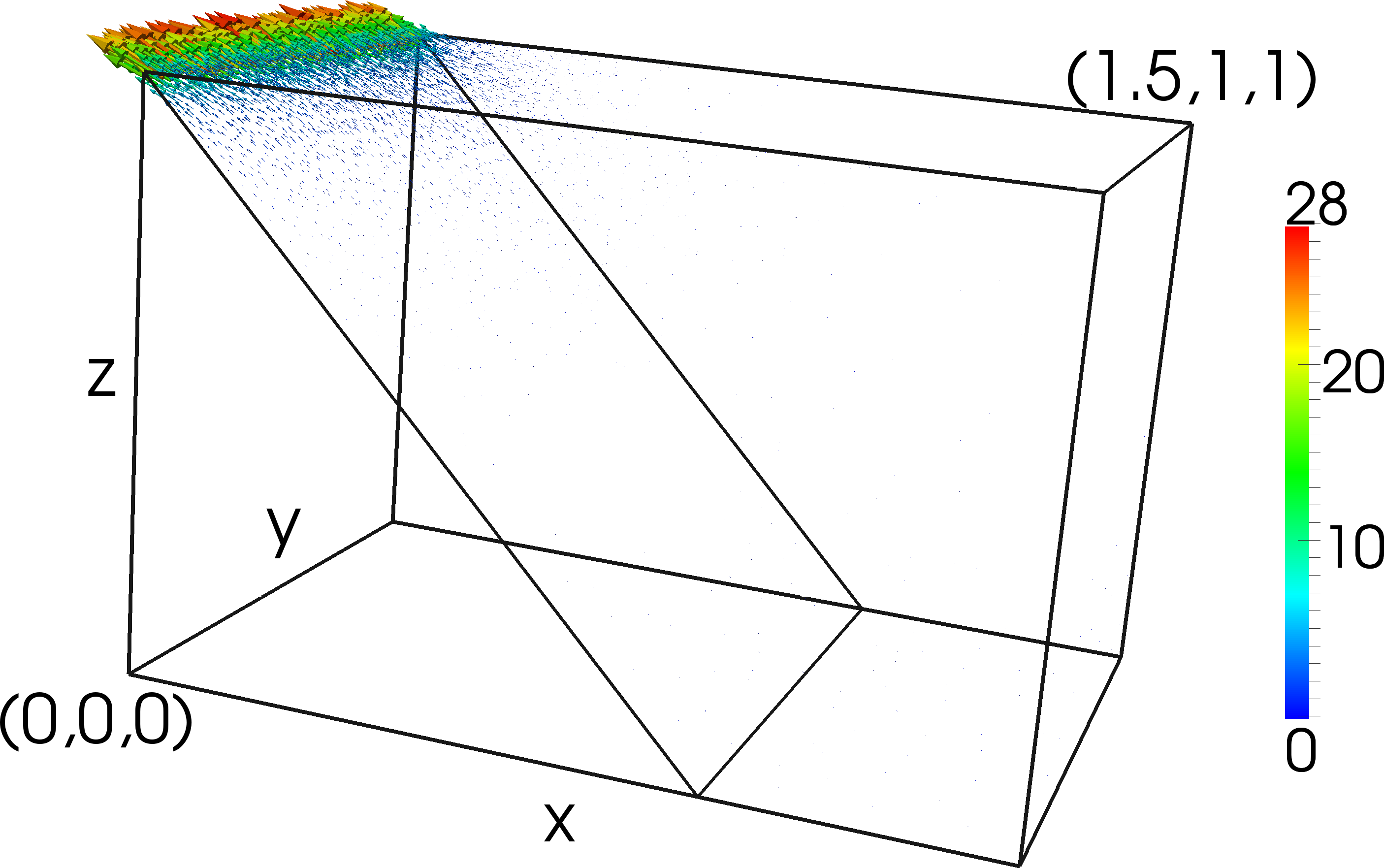}
\label{fig:3Dsimulationd}}
\caption{Vector plots of the magma and matrix velocities in the wedge
  of a three-dimensional subduction zone for $\alpha = 1$ and $\alpha
  = 1000$ using the stress-free boundary conditions on~$\Gamma_{3}$.}
\label{fig:3Dsimulation}
\end{figure}

Table~\ref{tab:tc3} shows the number of iterations needed for the AMG
preconditioned MINRES method for the three-dimensional wedge problem.
The LU preconditioned solver is not practical for this problem when
using reasonable mesh resolutions.  All cases have been computed in
parallel using 16 processes.  The computed examples span a range of
problem sizes, and only relatively small changes in the iteration
count are observed for changes in the number of degrees of
freedom. Again, as $\alpha$ becomes larger, so too does the iteration
count.
\begin{table}
  \caption{Number of iterations required for AMG preconditioned MINRES
    for the three-dimensional subduction model for different levels of
    mesh refinement and and different values of~$\alpha$. The number
    of degrees of freedom is denoted by~$N$. For the $\alpha = 1000$
    case, four applications of a Chebyshev smoother, with one
    symmetric Gauss-Seidel iteration for each application, was used.
    All tests were run using 16 MPI processes.}
\begin{center}
\begin{tabular}{c|c|c|c|c}
$N$        & $\alpha=1$ & $\alpha = 10$ & $\alpha = 100$  & $\alpha=1000$  \\
\hline
 88,500    & 42         & 127           & 363             & 654   \\
 400,690   & 44         & 122           & 355             & 692   \\
 1,821,991 & 43         & 122           & 367             & 732   \\
 8,124,691 & 41         & 120           & 355             & 775
\end{tabular}
\label{tab:tc3}
\end{center}
\end{table}

\section{Conclusions}
\label{s:conclusions}

In this work we introduced and analysed an optimal preconditioner for
a finite element discretisation of the simplified McKenzie equations
for magma/mantle dynamics. Analysis of the preconditioner showed that
the Schur complement of the block matrix arising from the finite
element discretisation of the simplified McKenzie equations may be
approximated by a pressure mass matrix plus a permeability matrix. The
analysis was verified through numerical simulations on a unit square
and two- and three-dimensional wedge flow problems inspired by
subduction zones. For all computations we used $P^{2}$--$P^{1}$
Taylor--Hood finite elements as they are inf-sup stable in the
degenerate limit of vanishing permeability.  Numerical tests
demonstrated optimality of the solver.  We observed that the multigrid
version of the preconditioner was not uniform with respect to the
bulk-to-shear-viscosity ratio~$\alpha$. As $\alpha$ is increased, the
iteration count for the solver increases. We observe a similar
behaviour as $k^{*}$ increases.

The analysis and testing of an optimal block preconditioning method
for magma/mantle dynamics presented in this work lays a basis for
creating efficient and optimal simulation tools that will ultimately
be put to use to study the genesis and transport of magma in
plate-tectonic subduction zones. Optimality has been demonstrated, but
some open questions remain regarding uniformity with respect to some
model parameters.

\section*{Acknowledgements}

We thank L.~Alisic and J.~F.~Rudge for the many discussions held
related to this paper. We also thank the reviewers M.~Knepley,
M.~Spiegelman, C.~Wilson and one that remained anonymous, whose
comments helped improve this paper. The authors acknowledge the
support of the Natural Environment Research Council under grants
NE/I026995/1 and NE/I023929/1. Katz is furthermore grateful for the
support of the Leverhulme Trust.

\bibliographystyle{abbrvnat}
\bibliography{references}
\end{document}